\numberwithin{equation}{section}
\newtheorem{theorem}{Theorem}[section]
\newtheorem{lemma}[theorem]{Lemma}
\newtheorem{corollary}[theorem]{Corollary}
\theoremstyle{definition}
\newtheorem{definition}[theorem]{Definition}
\newtheorem{def-prop}[theorem]{Definition-Proposition}
\newtheorem{conjecture}[theorem]{Conjecture}
\newtheorem{remark}[theorem]{Remark}
\newtheorem{example}[theorem]{Example}
\newtheorem*{acknowledgement}{Acknowledgement}
\newtheorem{algorithm}[theorem]{Algorithm}
\DeclareMathOperator{\rank}{rank}
\DeclareMathOperator{\bi}{bi}
\newcommand{\NN}{{\mathbb N}}
\newcommand{\kk}{{\mathbbm k}}
\def\T{{\mathbf T}}
\def\B{{\mathcal B}}
\def\R{{\mathcal R}}
\def\R{{\mathcal R}}
\def\mm{{\mathfrak m}}
\def\1{{\bf 1}}
\def\0{{\bf 0}}
\begin{document}
	
	\title{Algebraic algorithms for even circuits in graphs}
	
	\author{Huy T\`ai H\`a}
	\address{Department of Mathematics \\
		Tulane University \\
		6823 St. Charles Avenue \\
		New Orleans, LA 70118}
	\email{tha@tulane.edu}
	\urladdr{http://www.math.tulane.edu/~tai/}
	
	\author{Susan Morey}
	\address{Department of Mathematics \\
		Texas State University\\
		601 University Drive\\
		San Marcos, TX 78666}
	\email{morey@txstate.edu}
	\urladdr{http://www.txstate.edu/~sm26/}

	\keywords{graph, circuit, even cycle, directed cycle, monomial ideal, Rees algebra, edge ideal}
	\subjclass[2010]{05C38, 13D02, 05C20, 13A30}
	
	\begin{abstract}
		We present an algebraic algorithm to detect the existence of and to list all indecomposable even circuits in a given graph. We also discuss an application of our work to the study of directed cycles in digraphs.
	\end{abstract}
	
	\maketitle
	

\section{Introduction} \label{sec.intro}

Detecting the existence of cycles in graphs is a fundamental problem in graph theory (cf. \cite{AYZ1, AYZ2, CKS, CCKV, C, DKS, FZ, HOV1, HOV2, MM, M, Seymour, T, YZ, Z}). Graph theoretic algorithms exist to enumerate both odd and even cycles. In \cite{FHVT}, the first author, together with Francisco and Van Tuyl, gave an algebraic algorithm to detect and exhibit all induced odd cycles in an undirected graph. The study of \cite{FHVT} is an example of the rich interaction between commutative algebra and graph theory. In fact, using algebraic methods to study combinatorial structures and using combinatorial data to understand algebraic properties and invariants has evolved to be an active research topic in combinatorial commutative algebra in recent years (cf. \cite{FHM, MV} and references therein).

In the present paper, we continue this line of work and describe an algebraic algorithm to enumerate even circuits in an undirected graph; a \emph{circuit} is a closed walk in which the edges are all distinct and a \emph{cycle} is a closed walk in which the vertices are all distinct. We shall also discuss an application of our work to the problem of finding directed cycles in a directed graph (digraph).
Let $G = (V,E)$ be a finite simple undirected graph on the vertex set $V = \{x_1, \dots, x_n\}$. Let $\kk$ be a field and identify the vertices in $V$ with the variables in $R = \kk[x_1, \dots, x_n]$. The \emph{edge ideal} of $G$ is defined to be
$$I(G) = \langle x_ix_j ~\big|~ x_ix_j \in E\rangle.$$
The construction of edge ideals of graphs was first introduced by Villarreal in \cite{Rafael1990} (see also \cite{F, HVT} for edge ideals of simplicial complexes and hypergraphs) and has been an essential tool in various studies in this area of research. Our main result states that even circuits in $G$ can be detected by considering the \emph{reduced Jacobian dual} of the edge ideal $I(G)$, a notion which we will defined in Section \ref{sec.prel}.

An even circuit is called \emph{indecomposable} if it cannot be realized as the edge-disjoint union of two smaller even circuits. Our first theorem reads as follows, leaving unexplained terminology until later (Section \ref{sec.prel}).

\medskip

\noindent{\bf Theorem \ref{thm.main}.} Let $G$ be a graph, $I=I(G)$, and $\phi$ the presentation matrix from the Taylor resolution of $I$. Then the indecomposable even circuits of $G$ correspond exactly to the binomial minors of the reduced Jacobian dual $\overline{B(\phi)}$ which satisfy the following conditions:
\begin{enumerate}
\item the monomials in these binomials are square-free and relatively prime; and
\item the columns of the corresponding submatrices are pairwise \emph{center-distinct}.
\end{enumerate}

We focus on even circuits because they form a larger class than that of even cycles. With a slight modification of condition (2), we can also obtain an algebraic characterization for cycles of even lengths in $G$, see Remark \ref{rmk.cycle}. The proof of Theorem \ref{thm.main} is based on an ad hoc analysis of the possible forms of minors of the reduced Jacobian dual $\overline{B(\phi)}$.

Theorem \ref{thm.main} allows us to derive an algebraic algorithm to enumerate all indecomposable even circuits in a given graph that runs in polynomial time on the size of the edge set of the graph, see Algorithm \ref{alg}. Our goal is not to compare the running time of our algorithm to that of existed ones, rather we aim to exhibit yet another interesting connection between commutative algebra and graph theory. Theorem \ref{thm.main} also has an algebraic consequence to finding defining equations for the Rees algebras of edge ideals of graphs, see Theorem \ref{thm.J}.

Theorem \ref{thm.main} furthermore has an interesting application toward the study of directed cycles in digraphs. For a digraph $D$, we construct a bipartite graph $G = G(D)$, see Definition \ref{def.Bgraph}. Note that this bipartite graph has a natural perfect matching, which we denote by $M_D$. There is an established equivalence between the directed cycles in $D$ and the even cycles in $G$ with a certain property that traces its roots back to work done by Dulmage and Mendelsohn in the 1950's (see for example \cite{DM, KL}) which we restate for convenience. Specifically, again leaving unexplained terminology until later, we have:

\medskip

\noindent{\bf Theorem \ref{thm.cycle}.} The directed cycles in a digraph $D$ correspond exactly to the even cycles in $G = G(D)$ in which a collection of alternating edges forms a subset of the perfect matching $M_D$.

\medskip

Theorem \ref{thm.cycle}, combined with Algorithm \ref{alg}, gives an algebraic algorithm to enumerate all directed cycles in digraphs, see Corollary \ref{cor.digraph}. As a consequence of Theorems \ref{thm.main} and \ref{thm.cycle}, we are also able to translate the famous Caccetta-H\"aggkvist conjecture for directed cycles in digraphs to a statement about binomial minors of the Jacobian dual matrix, see Conjecture \ref{conj}.

\begin{acknowledgement} The first author is partially supported by Louisiana Board of Regents (grant \#LEQSF(2017-19)-ENH-TR-25). The authors thank the anonymous referees for a careful, detailed reading of the manuscript and useful suggestions.
\end{acknowledgement}


\section{Preliminaries} \label{sec.prel}

In this section, we collect important notations and definitions used in the paper. For unexplained terminology in commutative algebra, we refer the reader to \cite{HH, Villarreal2015}, and in graph theory, we refer the reader to \cite{D}.

\medskip

\noindent{\bf Algebra.} Throughout the paper, $\kk$ denotes an infinite field. Let $R = \kk[x_1, \dots, x_n]$ be a polynomial ring over $k$ and let $\mm = (x_1, \dots, x_n)$. Let $I \subseteq R$ be an ideal and use $\mu(I)$ to denote the minimal number of generators of $I$. Let $\phi$ be a presentation matrix of $I$.


\begin{definition}\label{Rees}
	The \emph{Rees algebra} of $I$ is defined to be the graded ring
	$$\R(I) = R[It] = \R \oplus It \oplus I^2t^2 \oplus \dots \subseteq R[t].$$

Suppose that $I = (f_1, \dots, f_r)$. Then there exists a natural presentation of the Rees algebra of $I$, namely,
$$R[T_1, \dots, T_r] \stackrel{\theta}{\rightarrow} R[It] \rightarrow 0,$$
given by $T_i \mapsto f_it$ for $i = 1, \dots, r$, where $T_1, \dots, T_r$ are indeterminates. Set $J= {\mbox{\rm ker }}\theta$. Then $R[It] \cong R[T_1, \ldots, T_r]/J$, and $J$ is referred to as the {\em ideal of equations} or {\em defining ideal} of $R[It]$. Since $\phi(T_i)=f_it$, we will say that $T_i$ corresponds to the generator $f_i$ of $I$.
\end{definition}

By the definition of a presentation matrix $\phi$, the linear (in the variables $T_1, \ldots, T_r$) equations of $J$ are generated by entries of the matrix $[T_1 \ \dots \ T_r]\cdot \phi$. When these entries are linear in $x_1, \ldots, x_n$, that is, when the entries of $\phi$ are linear, then $\phi$ is the Jacobian matrix of these equations with respect to $T_1, \ldots, T_r$. In this setting, one can also define another Jacobian matrix of the same polynomials in $[T_1 \ \dots \ T_r]\cdot \phi$ but with respect to $x_1, \dots, x_n$. This new Jacobian matrix is usually denoted by $B(\phi)$ and referred to as the {\em Jacobian dual} of $\phi$. We now give the generalized version of this notion when the entries of $\phi$ are not necessarily all linear. See \cite[Section 1.5]{Wolmer} and \cite{Morey} for further information on Jacobian duals.

\begin{definition}[Jacobian dual] \label{def.dual}
	Let $r = \mu(I)$ and let $\phi$ be a presentation matrix of $I$ with respect to a set of $r$ generators of $I$.
	\begin{enumerate}
		\item A \emph{Jacobian dual} of $\phi$, denoted by $B(\phi)$, is defined to be a matrix, whose entries are in $R[T_1, \dots, T_r]$ and linear in the variables $T_1, \dots, T_r$, that satisfies the equation
		$$[T_1 \dots T_r]\cdot \phi = [x_1 \dots x_n]\cdot B(\phi).$$
		\item The \emph{reduced Jacobian dual} of $\phi$, denoted by $\overline{B(\phi)}$, is defined to be $B(\phi) \otimes_k R/\mm$.
	\end{enumerate}
\end{definition}
Observe that given a fixed $\phi$, there may be more than one choice for $B(\phi)$, but $\overline{B(\phi)}$ exists uniquely up to elementary row and column rearrangements that come from re-orderings (see, for example, \cite{Morey}).
The matrix $B(\phi)$, or $\overline{B(\phi)}$, has served as a source for the higher degree generators of $J$, see \cite{JM, KPU, Morey, UV, W} for example, with the emphasis being on minors of $\overline{B(\phi)}$.

\begin{example}\label{ex.twosquares}
Consider the graph

\begin{center}
\begin{tikzpicture}

\tikzstyle{point}=[circle,thick,draw=black,fill=black,inner sep=0pt,minimum width=4pt,minimum height=4pt]

 \node (a)[point,label={[xshift=-0.1cm, yshift=-0.7 cm]:$x_5$}] at (.5,0) {};

\node (b)[point,label={[xshift=-0.1cm, yshift=0.0 cm]:$x_6$}] at (0.5,1.4) {};

\node (f)[point,label={[xshift=0.1cm, yshift=-0.7 cm]:$x_4$}] at (1.9,0) {};

\node (e)[point,label={[xshift=0.1cm, yshift=-0.7 cm]$x_3$}] at (3.1,0) {};

\node (d)[point,label={[xshift=-0.1cm, yshift=-0 cm]:$x_2$}] at (3.1,1.4) {};

\node (c)[point,label={[xshift=-0.1cm, yshift=-0 cm]:$x_1$}] at (1.9,1.4) {};

\node (g)[point,label={[xshift=-0.1cm, yshift=-0 cm] $x_7$}] at (4.5, 1.4){};

\node (h)[point,label={[xshift=-0.1cm, yshift=-0.7cm] $x_8$}] at (4.5, 0){};

\path[draw, thick]
(a) edge node[auto] {$T_6$} (b)
(a) edge node[above] {$T_5$} (f)
(c) edge node[below] {$T_1$} (d)
(c) edge node[xshift=-0.2cm] {$T_4$} (f)
(d) edge node[auto] {$T_2$} (e)
(b) edge node[below] {$T_7$} (c)
(e) edge node[above]{$T_3$} (f)
(g) edge node[auto] {$T_8$} (h);

\end{tikzpicture}
\end{center}
corresponding to $I=(x_1x_2, x_2x_3,x_3x_4, x_1x_4,x_4x_5, x_5x_6, x_6x_1, x_7x_8).$
 Then,
 \begin{tiny}
  $$\phi = \left[ \begin{array}{ccccccccccccccccccccc}
 x_3 & 0 & 0 & x_4 & 0 & x_6 & 0 & 0 & 0 & 0 & x_3x_4 & x_4x_5 & x_5x_6 & x_7x_8 & \\
-x_1& x_4& 0 & 0& 0 & 0& 0 & 0 & 0 & 0 & 0 & 0 & 0 & 0 &\\
0 & -x_2 & -x_1 & 0 & -x_5 & 0 & 0 & 0 & 0 & 0 & -x_1x_2 & 0 & 0 & 0 & \\
 0 & 0& x_3 & -x_2& 0 & 0 & -x_5 & 0 & 0 & -x_6& 0 & 0 & 0 & 0 & \cdots \\
 0 & 0 & 0& 0&x_3& 0 & x_1 & -x_6 & 0 & 0 & 0 & -x_1x_2 & 0 & 0 & \\
 0 & 0 & 0 & 0 & 0& 0 & 0 & x_4& x_1 & 0& 0 & 0 & -x_1x_2 & 0 & \\
 0 & 0 & 0 & 0 & 0 & -x_2 & 0 & 0 & -x_5& x_4 & 0 & 0 & 0 & 0 & \\
 0 & 0 & 0 & 0 & 0 & 0 & 0 & 0 & 0 & 0 & 0 & 0 & 0 & -x_1x_2 & \\
 \end{array}\right]$$
 \end{tiny}
where the remaining columns of $\phi$ correspond to the rest of the (quadratic) Koszul relations on disjoint pairs of edges. The Koszul relations involving $T_1$ have been included for illustration.
Now,
\begin{eqnarray*}
\lefteqn{[T_1, \ldots, T_8] \cdot \phi =} \\
& & ( x_3T_1-x_1T_2, x_4T_2-x_2T_3, x_3T_4-x_1T_3, x_4T_1-x_2T_4, x_3T_5-x_5T_3, x_6T_1-x_2T_7, \\
& & x_1T_5 - x_5T_4, x_4T_6 - x_6T_5, x_1T_6 - x_5T_7, x_4T_7 - x_6T_4, x_3x_4T_1 - x_1x_2 T_3, \\
& & x_4x_5T_1 - x_1x_2 T_5, x_5x_6T_1-x_1x_2T_6, x_7x_8T_1 - x_1x_2 T_8, \cdots ).
\end{eqnarray*}
 When using these equations to form $B(\phi)$ as in Definition~\ref{def.dual}, the nonlinear terms (in the variables $x_i$'s) are ambiguous. For example, $x_3x_4T_1$ can be viewed as $x_3(x_4T_1)$ or as $x_4(x_3T_1)$. Different choices of $B(\phi)$ arise from different interpretations for each such nonlinear term in the $x_i$'s. The coefficient of $x_i$ of the $jth$ equation goes in the $(i,j)$ entry of $B(\phi)$. One such choice of $B(\phi)$ is
 \begin{tiny}
$$ B(\phi) = \left[ \begin{array}{ccccccccccccccccccccc}
 -T_2 & 0 & -T_3 & 0 & 0 & 0 & T_5 & 0 & T_6 & 0 & -x_2T_3 & -x_2T_5 & -x_2T_6 & 0 \\
 0 & -T_3 & 0 & -T_4 & 0 & -T_7 & 0 & 0 & 0 & 0 & 0 & 0 & 0 &  -x_1 T_8\\
 T_1 & 0 &  T_4 & 0 & T_5 & 0 & 0 & 0 & 0 & 0 & x_4T_1 & 0 & 0 & 0 & \\
 0 & T_2 & 0 & T_1 & 0 & 0 & 0 & T_6 & 0 & T_7 & 0 & x_5T_1 & 0 & 0 & \\
 0 & 0 & 0 & 0 & -T_3 & 0 & -T_4 & 0 & -T_7 & 0 & 0 & 0 & x_6T_1 & 0 & \cdots\\
 0 & 0 & 0 & 0 & 0 & T_1 & 0 & -T_5 & 0 & -T_4 &  0 & 0 & 0 & 0 & \\
 0 & 0 & 0 & 0 & 0 & 0 & 0 & 0 & 0 & 0 & 0 &0 & 0 & x_8T_1 &\\
 0 & 0 & 0 & 0 & 0 & 0 & 0 & 0 & 0 & 0 &  0 & 0 & 0 & 0 & \\
 \end{array}\right].$$
 \end{tiny}
Tensoring with $R/{\mm}$ yields
  $$\overline{B(\phi)} = \left[ \begin{array}{ccccccccccccccccccccc}
 -T_2 & 0 & -T_3 & 0 & 0 & 0 & T_5 & 0 & T_6 & 0 & 0 & 0 & 0& 0 \\
 0 & -T_3 & 0 & -T_4 & 0 & -T_7 & 0 & 0 & 0 & 0 & 0 & 0 & 0 &  0\\
 T_1 & 0 &  T_4 & 0 & T_5 & 0 & 0 & 0 & 0 & 0 & 0 & 0 & 0 & 0 & \\
 0 & T_2 & 0 & T_1 & 0 & 0 & 0 & T_6 & 0 & T_7 & 0 & 0 & 0 & 0 & \\
 0 & 0 & 0 & 0 & -T_3 & 0 & -T_4 & 0 & -T_7 & 0 & 0 & 0 & 0 & 0 & \cdots\\
 0 & 0 & 0 & 0 & 0 & T_1 & 0 & -T_5 & 0 & -T_4 &  0 & 0 & 0 & 0 & \\
 0 & 0 & 0 & 0 & 0 & 0 & 0 & 0 & 0 & 0 & 0 &0 & 0 & 0 &\\
 0 & 0 & 0 & 0 & 0 & 0 & 0 & 0 & 0 & 0 &  0 & 0 & 0 & 0 & \\
 \end{array}\right].$$
 \end{example}

Note that entries of $B(\phi)$ that result from interpretations of the nonlinear terms of $\phi$ become zero when passing to $\overline{B(\phi)}$. Thus, the nonzero columns of $\overline{B(\phi)}$ correspond precisely to the linear columns of $\phi$ and the $0$-rows of $\overline{B(\phi)}$ correspond to vertices that do not appear as endpoints of any path of length two in $G$. Such vertices can be isolated, part of a connected component consisting of a single edge, or the center vertex of a connected component that is a tree of diameter $2$. Deleting zero-rows and zero-columns will not change the minors of a matrix. Thus, in practice, when focusing on minors, one can work with a smaller matrix $\phi'$ defined by the linear columns of $\phi$, and assume that the content ideal, $I_1(\phi')$, is generated by a subset of the variables, say $x_1, \ldots, x_d$. In this case, $[T_1, \ldots, T_r] \cdot \phi' = [x_1, \ldots, x_d] \cdot \overline{B(\phi)}$.

When $I$ is a monomial ideal, a particular presentation matrix $\phi$ of $I$ that we shall make use of comes from the Taylor resolution of $I$. We shall now recall the construction of the Taylor resolution and its presentation matrix; see \cite{BPS, Taylor} for more details. For a collection $\B = \{f_1, \dots, f_r\}$ of polynomials in $R$ and a subset $\sigma \subseteq \B$, let $f_\sigma$ denote the least common multiple of $\{f_i ~\big|~ i \in \sigma\}$.

\begin{definition}\label{def.Taylor}
Let $I \subseteq R$ be a monomial ideal with the unique set of monomial generators $\B = \{f_1, \dots, f_r\}$. The \emph{Taylor resolution} of $I$ is the following complex:
$$0 \rightarrow F_r \stackrel{\partial_r}{\rightarrow} F_{r-1} \stackrel{\partial_{r-1}}{\rightarrow} \dots \stackrel{\partial_2}{\rightarrow} F_1 \stackrel{\partial_1}{\rightarrow} I \rightarrow 0,$$
where, for $p = 1, \dots, r$, $F_p = R^{r \choose p}$ is the free $R$-module of rank ${r \choose p}$ whose basis corresponds all subsets of $p$ elements from $\B$, and the differential map $\partial_p: F_p \rightarrow F_{p-1}$ is defined, for each basis element $e_\sigma \in F_p$ corresponding to a subset $\sigma$ of cardinality $p$ in $\B$, by
$$\partial_p(e_\sigma) = \sum_{f_\ell \in \sigma} (-1)^{\left|\{f_j \in \sigma ~|~ j < \ell\}\right|} \dfrac{f_\sigma}{f_{\sigma \setminus \{f_\ell\}}}e_{\sigma \setminus \{f_\ell\}}.$$
The \emph{presentation matrix} of $I$ from its Taylor resolution is the matrix corresponding to the map $F_2 \stackrel{\partial_2}{\rightarrow} F_1$; its $(\{f_j\}, \tau)$-entry, for $\{f_j\} \subseteq \B$, $\tau \subseteq \B$ with $|\tau| = 2$, is equal to 0 if $f_j \not\in \tau$, equal to $(-1)\dfrac{f_\tau}{f_j}$ if $\tau = \{f_j,f_k\}$ and $j < k$, and equal to $\dfrac{f_\tau}{f_j}$ if $\tau = \{f_j,f_k\}$ and $j > k$.
\end{definition}

The matrix $\phi$ in Example \ref{ex.twosquares} is an instance of the presentation matrix that comes from the Taylor resolution of a monomial ideal. Another important notion that we shall use is minors and ideals of minors of a matrix. 

\begin{definition} \label{def.minor}
Let $A$ be an $r \times s$ matrix whose entries are polynomials in $R$. For $t \le \min \{r, s\}$, a \emph{$t \times t$ minor} of $A$ is the determinant of a $t \times t$ submatrix of $A$. The ideal in $R$ generated by all $t \times t$ minors of $A$ is often denoted by $I_t(A)$. A minor is \emph{binomial} if it can be written as the sum (or difference) of two monomials in $R$.
\end{definition}

\medskip

\noindent{\bf Graph theory.} An \emph{undirected graph} $G = (V,E)$ consists of a set $V$ of distinct points, called the \emph{vertices}, and a collection $E$ of unordered pairs of vertices, called the \emph{edges}. We shall assume that all graphs in this paper are \emph{simple}; that is, a graph will have neither loops nor multiple edges. We shall write $xy$ for the undirected edge between vertices $x$ and $y$ in a graph.

\begin{definition} Let $G$ be an undirected graph.
\begin{enumerate}
\item A \emph{walk} is an alternating sequence of vertices and edges $x_1, e_1, x_2, e_2, \dots, e_{s-1}, x_s$ such that $e_{i} = \{x_i, x_{i+1}\}$ for all $i = 1, \dots, s-1$. Such a walk is said to be \emph{closed} if $x_1 = x_s$.
\item A walk is called a \emph{trail} if its edges are distinct (while its vertices may repeat). A closed trail is called a \emph{circuit}.
\item A walk is called a \emph{path} if its edges and vertices are distinct (except possibly at $x_1 = x_s$). A closed path is called a \emph{cycle}.
\item The \emph{length} of a walk is the number of edges that the walk transverses (including multiplicities). A walk is \emph{even} (or \emph{odd}) if its length is an even (or odd) number.
\end{enumerate}
\end{definition}

We often list only the vertices to indicate a walk since the edges are obvious from the vertices.
The main graph-theoretic structure that our work captures in this paper is indecomposable even circuits, which we shall define below. We also recall a similar notion of primitive even closed walks.

\begin{definition}
Let $G$ be a graph.
\begin{enumerate}
	\item An even circuit is \emph{indecomposable} if it cannot be realized as the edge-disjoint union of two smaller even circuits.
	\item An even closed walk is \emph{primitive} if it does not contain an even closed subwalk.
\end{enumerate}
\end{definition}

\begin{remark}
	\label{cycles.eqtns}
	There is a close connection between even closed walks in a graph $G$ and the equations of the Rees algebra of the edge ideal of $G$, see \cite{Rafael1995}. In particular, suppose $x_1, e_1, x_2, e_2, \dots, e_{2s-1}, x_{2s}, e_{2s}$, where $e_{i} = \{x_i, x_{i+1}\}$ and $e_{2s}=\{x_{2s}, x_1\}$, is an even closed walk and $\theta(T_i)=e_i$ as in Definition~\ref{Rees}. Then 
	$$\theta(\prod_{i=1}^s T_{2i} - \prod_{i=1}^s T_{2i-1})= \prod_{i=1}^s e_{2i} - \prod_{i=1}^s e_{2i-1} = \prod_{i=1}^{2s}x_i - \prod_{i=1}^{2s}x_i =0$$
	so $\prod_{i=1}^s T_{2i} - \prod_{i=1}^s T_{2i-1} \in J$.
\end{remark}

An application of our work is to directed graphs, so we shall also recall basic terminology for directed graphs. A \emph{digraph} $D = (Z, \vec{E})$ consists of a set $Z$ of distinct points, called the \emph{vertices}, and a collection $\vec{E}$ of ordered pairs of vertices, called the \emph{directed edges}. We will also assume that all digraphs in this paper are simple digraphs. We shall write $x \rightarrow y$ for the directed edge from $x$ to $y$ in a digraph.

Directed walks, paths, circuits and cycles in a digraph can be defined similarly to those in an undirected graph with only one difference, that is, if $x_1, e_1, x_2, \dots, e_{s-1}, x_s$ represents a directed walk from $x_1$ to $x_s$ then $e_i$ is the directed edge $x_i \rightarrow x_{i+1}$ for all $i = 1, \dots, s-1$.

The application of our work to directed cycles in digraphs is based on the following construction (\cite{Vpersonal}).

\begin{definition} \label{def.Bgraph}
	Let $D = (Z, \vec{E})$ be a digraph over the vertex set $Z = \{z_1, \dots, z_m\}$. The bipartite graph $G(D)$, associated to $D$, is constructed as follows.
	\begin{enumerate}
		\item The vertices of $G(D)$ are $\{x_1, \dots, x_m, y_1, \dots, y_m\}$.
		\item The edges of $G(D)$ are:
		\begin{enumerate}
			\item $\{x_i,y_i\}$ for all $i = 1, \dots, m$; and
			\item $\{x_i,y_j\}$, for $i \not= j$, if $z_i \rightarrow z_j$ is an edge in $\vec{E}$.
		\end{enumerate}
	\end{enumerate}
\end{definition}

It is easy to see that for any digraph $D = (Z,\vec{E})$, the bipartite graph $G(D)$ has a perfect matching $\left\{e_i = \{x_i, y_i\} ~\big|~ i = 1, \dots, m\right\}$. We shall denote this perfect matching of $G(D)$ by $M_D$.

\begin{example}\label{ex.digraph}
Let $D$ be directed graph
\begin{center}
	\begin{tikzpicture}[shorten >=1pt]
	
	\tikzstyle{point}=[circle,thick,draw=black,fill=black,inner sep=0pt,minimum width=4pt,minimum height=4pt]
		\node [label={[xshift=0cm, yshift=0cm]: $D:$}] at (0.5,8.5){};
	\node (a)[point,label={[xshift=0.1cm, yshift=0 cm]:$z_1$}] at (1,10) {};
	
	\node (b)[point,label={[xshift=0.1cm, yshift=0 cm]:$z_2$}] at (3,10) {};
	
	\node (c)[point,label={[xshift=0.1cm, yshift=-0.7 cm]:$z_3$}] at (3,8) {};
	
	\node (d)[point,label={[xshift=0.1cm, yshift=0 cm]:$z_4$}] at (5, 10) {};
	
	\node (e)[point,label={[xshift=0.1cm, yshift=-0.7 cm]:$z_5$}] at (5,8) {};

	\path[->, draw, thick]
	(a) edge (b)
	(b) edge (c)
	(a) edge (c)
	(d) edge (b)
	(e) edge (d)
	(c) edge (e);
\end{tikzpicture}
\end{center}

Then, the bipartite graph $G = G(D)$ associated to $D$ is
\begin{center}
	\begin{tikzpicture}[shorten >=1pt]
	
	\tikzstyle{point}=[circle,thick,draw=black,fill=black,inner sep=0pt,minimum width=4pt,minimum height=4pt]
	
	\node [label={[xshift=0cm, yshift=0cm]: $G:$}] at (6.5,8.5){};
	
	\node (x1)[point,label={[xshift=0.1cm, yshift=-0.7 cm]:$x_1$}] at (7.5,8) {};
	
	\node (x2)[point,label={[xshift=0.1cm, yshift=-0.7 cm]:$x_2$}] at (9.5,8) {};
	
	\node (x3)[point,label={[xshift=0.1cm, yshift=-0.7 cm]:$x_3$}] at (11.5,8) {};
	
	\node (x4)[point,label={[xshift=0.1cm, yshift= -0.7 cm]:$x_4$}] at (13.5,8) {};
	
	\node (x5)[point,label={[xshift=0.1cm, yshift=-0.7 cm]:$x_5$}] at (15.5,8) {};
	
	\node (y1)[point,label={[xshift=0.1cm, yshift=0 cm]:$y_1$}] at (7.5,10) {};

\node (y2)[point,label={[xshift=0.1cm, yshift=-0 cm]:$y_2$}] at (9.5,10) {};

\node (y3)[point,label={[xshift=0.1cm, yshift=-0 cm]:$y_3$}] at (11.5,10) {};

\node (y4)[point,label={[xshift=0.1cm, yshift=0 cm]:$y_4$}] at (13.5,10) {};

\node (y5)[point,label={[xshift=0.1cm, yshift=-0 cm]:$y_5$}] at (15.5,10) {};

\draw (x1.center) -- (y1.center);
\draw (x2.center) -- (y2.center);
\draw (x3.center) -- (y3.center);
\draw (x4.center) -- (y4.center);
\draw (x5.center) -- (y5.center);

\draw (x1.center) -- (y2.center);
\draw (x1.center) -- (y3.center);
\draw (x2.center) -- (y3.center);
\draw (x4.center) -- (y2.center);
\draw (x5.center) -- (y4.center);
\draw (x3.center) -- (y5.center);
	
\end{tikzpicture}
\end{center}
\end{example}


\section{Even circuits in graphs} \label{sec.evenwalk}

In this section, we present an algebraic algorithm to enumerate indecomposable even circuits in a graph. Recall that $G = (V,E)$ is a simple graph on the vertex set $V = \{x_1, \dots, x_n\}$ with $r = |E|$. For $I=I(G)$, fix $\phi$ to be the presentation matrix of $I = I(G)$ that results from the Taylor resolution of $I$, as in Definition \ref{def.Taylor}. For the remainder of the paper, $\phi$ will always refer to the Taylor presentation matrix unless otherwise noted.

We start with the following simple observation about $\phi$. Example~\ref{ex.twosquares} already illustrates the statements below, which are generally known but written here for ease of reference.

\begin{lemma} \label{lem.phi}
	If $G$ is a graph and $I=I(G)$, then the following statements hold.
	\begin{enumerate}
		\item The entries of $\phi$ are monomials in $\{x_1, \ldots , x_n\}$.
		\item Every column of $\phi$ has precisely two nonzero entries.
		\item The nonzero entries in each column of $\phi$ are either both linear or both quadratic.
		\item Every linear column of $\phi$ corresponds to a path of length $2$ in $G$ whose end-vertices are the nonzero entries of this column.
		\item Every quadratic column of $\phi$ corresponds to a pair of disjoint edges.
	\end{enumerate}
\end{lemma}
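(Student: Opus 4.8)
The plan is to read all five statements directly off the formula for the Taylor presentation matrix in Definition~\ref{def.Taylor}, specialized to the case where every generator $f_i$ of $I=I(G)$ is a squarefree monomial of degree two corresponding to an edge of $G$. The single structural fact driving everything is that a column of $\phi$ is indexed by a two-element subset $\tau = \{f_j, f_k\}$ of the generating set, and on that column the only nonzero entries occur in rows $f_j$ and $f_k$, where they equal $\pm\,\lcm(f_j,f_k)/f_j$ and $\pm\,\lcm(f_j,f_k)/f_k$ respectively.

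Statements (1) and (2) are then immediate. Since $f_j \mid \lcm(f_j,f_k)$, each quotient $\lcm(f_j,f_k)/f_j$ is an honest monomial in the $x_i$, giving (1); and because the $(\{f_j\},\tau)$-entry is declared to be $0$ whenever $f_j \notin \tau$, a column indexed by $\tau=\{f_j,f_k\}$ is supported on exactly the two rows $f_j$ and $f_k$, giving (2). (These entries are genuinely nonzero: as $f_j$ and $f_k$ are distinct edges, neither quotient collapses to a cancellation.)

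The heart of the argument, which yields (3)--(5) at once, is a short computation of $\lcm(f_j,f_k)$ organized by how the two edges meet. Write $f_j = x_ax_b$ and $f_k = x_cx_d$. Because $G$ is simple it has no loops and no repeated edges, so the edges $f_j$ and $f_k$ are distinct and share either one vertex or none; I would record these as the only two cases. If they share a vertex, say $b=c$ with $a,b,d$ distinct, then the edges form a path $x_a,x_b,x_d$ of length two, $\lcm(f_j,f_k)=x_ax_bx_d$ has degree three, and the two nonzero entries are $\lcm/f_j = x_d$ and $\lcm/f_k = x_a$ --- both linear, and equal precisely to the two end-vertices of the path. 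This proves (4) and the \emph{linear} half of (3). If instead the edges are disjoint, then $\lcm(f_j,f_k)=x_ax_bx_cx_d$ has degree four and the two entries $\lcm/f_j = x_cx_d$ and $\lcm/f_k = x_ax_b$ are both quadratic, giving (5) and the \emph{quadratic} half of (3).

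The only thing resembling an obstacle here is bookkeeping rather than mathematics: one must correctly enumerate the intersection patterns of the pair of edges and invoke simplicity of $G$ to discard the degenerate possibilities (a shared pair of vertices, i.e.\ a repeated edge, or a loop contributing a non-squarefree generator). Once those cases are excluded, each claim follows by direct substitution into the formula of Definition~\ref{def.Taylor}, which is why the statement is flagged as generally known.
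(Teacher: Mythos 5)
Your proposal is correct and follows essentially the same route as the paper: both arguments read the entries of $\phi$ off the Taylor resolution as the quotients $\lcm(f_j,f_k)/f_j$ and $\lcm(f_j,f_k)/f_k$, and then split into the two cases of edges sharing a vertex (linear entries, path of length two) versus disjoint edges (quadratic entries). Your write-up simply makes explicit the case bookkeeping that the paper's terser proof leaves to the reader.
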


\begin{proof}
The assertions are straightforward from the construction of the Taylor resolution of $I(G)$. Note that in general, all relations on a set of monomials can be generated by pairwise relations (i.e., relations of two monomials). If $m_1, m_2$ are monomials, then a minimal relation between them has the form $ am_1 + bm_2 = 0$ where $a=(lcm(m_1,m_2)/m_1)$ and $b=(lcm(m_1,m_2)/m_2)$ are monomials. If $m_1 \not= m_2$ both have degree $2$, then $a=m_2, b=m_1$ if $m_1, m_2$ have disjoint support. Otherwise, $a,b$ both have degree one. The results follow.
\end{proof}

As above, we denote by $B(\phi)$ and $\overline{B(\phi)}$ the Jacobian dual and respectively the reduced Jacobian dual of $\phi$. We obtain an immediate corollary of Lemma \ref{lem.phi} when $\phi$ is assumed to be the Taylor presentation matrix of $I(G)$ for a graph $G$.

\begin{corollary} \label{cor.reducedB}
The nonzero columns of $\overline{B(\phi)}$ are precisely the columns of $B(\phi)$ that correspond to linear columns of $\phi$. Particularly, each nonzero column of $\overline{B(\phi)}$ contains precisely two nonzero entries, each of which is a degree one monomial in $T_1, \ldots , T_r$.
\end{corollary}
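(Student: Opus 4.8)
The plan is to read off both statements directly from Lemma \ref{lem.phi} together with the defining relation of the Jacobian dual, and the observation (made in the paragraph preceding Definition \ref{def.Taylor}) that passing to $\overline{B(\phi)}$ kills exactly the columns of $B(\phi)$ coming from nonlinear columns of $\phi$. First I would recall the defining equation $[T_1 \dots T_r]\cdot \phi = [x_1 \dots x_n]\cdot B(\phi)$, and note that each column of $\phi$ produces the corresponding column of the row vector $[T_1 \dots T_r]\cdot \phi$, namely a single entry of the defining ideal $J$, and that this entry is matched by the same column of $B(\phi)$ via the coefficients of $x_1, \dots, x_n$.

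Next I would split the columns of $\phi$ according to Lemma \ref{lem.phi}(3) into the linear columns and the quadratic columns. For a \emph{linear} column, Lemma \ref{lem.phi}(2),(4) says the two nonzero entries are the endpoints $x_p, x_q$ of a path of length two $x_p x_c x_q$ (up to sign), so the corresponding entry of $[T_1 \dots T_r]\cdot \phi$ is a binomial of the form $x_p T_i - x_q T_j$ (with signs as in Definition \ref{def.Taylor}); its coefficients with respect to $x_p$ and $x_q$ are the degree-one monomials $\pm T_i$ and $\mp T_j$, and all other $x_\ell$-coefficients vanish. Thus the matching column of $B(\phi)$ has exactly two nonzero entries, each a degree-one monomial in the $T$'s, and this column survives tensoring with $R/\mm$ unchanged since it is already constant in the $x_\ell$'s. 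For a \emph{quadratic} column, Lemma \ref{lem.phi}(2),(5) gives a pair of disjoint edges, so the corresponding entry of $[T_1 \dots T_r]\cdot \phi$ is a binomial of the form $m_k T_i - m_i T_j$ with $m_i, m_k$ quadratic monomials; every choice of $B(\phi)$ assigns the coefficients of the $x_\ell$'s here as entries that are themselves degree-one in the $x_\ell$'s (times a $T$), so the whole column lies in $\mm \cdot R[T]$ and becomes zero in $\overline{B(\phi)} = B(\phi)\otimes_R R/\mm$. This establishes that the nonzero columns of $\overline{B(\phi)}$ are precisely those coming from the linear columns of $\phi$, and that each such column carries exactly two nonzero entries, each a degree-one monomial in $T_1, \dots, T_r$.

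I do not expect any genuine obstacle here; the statement is a bookkeeping consequence of Lemma \ref{lem.phi} and the definitions. The only point requiring mild care is the nonlinear-column case: because the factorization of a quadratic term such as $m_k T_i$ into $x_\ell \cdot(\,\cdot\,)$ is not unique, different valid choices of $B(\phi)$ place different $x$-linear entries in that column. The argument must therefore be phrased so that it applies to \emph{every} admissible $B(\phi)$, for which it suffices to observe that in all cases the entries produced by a quadratic column are divisible by some $x_\ell$ and hence die in $R/\mm$; the uniqueness of $\overline{B(\phi)}$ up to row and column rearrangement (noted after Definition \ref{def.dual}) then guarantees the conclusion is independent of the choice. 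I would finish by remarking that the correspondence ``nonzero column of $\overline{B(\phi)}$ $\leftrightarrow$ linear column of $\phi$ $\leftrightarrow$ length-two path in $G$'' is exactly the bridge used subsequently to pass between minors of $\overline{B(\phi)}$ and even circuits in $G$.
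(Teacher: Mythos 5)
Your proposal is correct and follows essentially the same route as the paper's proof: both rest on the defining equation $[T_1 \dots T_r]\cdot \phi = [x_1 \dots x_n]\cdot B(\phi)$, the column-by-column correspondence between $\phi$ and $B(\phi)$, and Lemma \ref{lem.phi}, concluding that linear columns of $\phi$ yield columns of $B(\phi)$ with two degree-one $T$-entries (which survive tensoring with $R/\mm$) while quadratic columns yield entries divisible by some $x_\ell$ (which vanish). The paper phrases this slightly more compactly as a degree count --- entries of a column of $B(\phi)$ have $x$-degree exactly one less than the corresponding column of $\phi$ --- whereas you work the two cases explicitly and flag the choice-of-$B(\phi)$ issue, but this is the same argument, not a different one.
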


\begin{proof} By definition, the nonzero entries of $\overline{B(\phi)}$ come from nonzero entries of $B(\phi)$ that are contained in $\kk[T_1, \ldots, T_r]$. Observe, by the equation
\begin{align}
[T_1 \ \dots \ T_r] \cdot \phi = [x_1 \ \dots \ x_n] \cdot B(\phi), \label{eq.Bphi}
\end{align}
that the columns of $B(\phi)$ correspond to columns of $\phi$.
Moreover, by Lemma \ref{lem.phi}, the nonzero entries in each column of $\phi$ are of the same degrees (either linear or quadratic). It further follows from the equation (\ref{eq.Bphi}) that the degree with respect to the $x_i$'s of nonzero entries of a column in $B(\phi)$ is exactly one less than that of the corresponding column of $\phi$. Hence, nonzero columns of $\overline{B(\phi)}$ correspond to columns without the $x_i$'s in $B(\phi)$, which correspond to columns of linear forms (and 0) in $\phi$.

The second statement also follows from Lemma \ref{lem.phi}.
\end{proof}

\begin{remark}
Note that since zero columns of a matrix will not play any role in what follows, we could define $\overline{B(\phi)}$ to exclude all its zero columns. That is, we are working just with the (uniquely defined) columns of $\phi$, whose nonzero entries are linear, that result from binomial relations of edges in paths of length 2 in $G$. As mentioned earlier, zero rows of $\overline{B(\phi)}$ will not play a role and can be eliminated by using the content ideal of $\phi$ to define $B(\phi)$ rather than $\mm$. However, since zero rows do not affect minors, which are our main focus when using $\overline{B(\phi)}$, it is a matter of convenience to allow them.
\end{remark}

As stated in Lemma~\ref{lem.phi}, the linear columns of $\phi$ are generated by pairs of monomials corresponding to edges that share a vertex. In other words, the linear columns of $\phi$ correspond to paths of length $2$ in the graph. It can be desirable for computational purposes to use a minimal presentation matrix for $\phi$ rather than the full Taylor presentation matrix. It is easy to check that there are three paths of length two in each triangle, yielding three linear relations, any two of which generate the third. Since this is the only redundancy among the linear relations for a graph, if the graph is triangle free, the linear columns of a minimal presentation matrix will be the same as the linear columns of the Taylor presentation matrix.

Since the linear columns arise from paths of length two, as seen in Lemma~\ref{lem.phi} the endpoints of each path are the nonzero entries of that column of $\phi$. These endpoints will thus be encoded in the corresponding column of $B(\phi)$ as the rows in which the nonzero entries appear. It is natural to expect that the third vertex, the midpoint of the path, would  play a role.

\begin{definition} We call two nonzero columns of $\overline{B(\phi)}$ \emph{center-distinct} if their corresponding paths of length 2 in $G$ have distinct middle vertices. We also call the middle vertices of these paths of length 2 the \emph{mid-points} of the corresponding columns.
\end{definition}

Finding the mid-point of a column of $\overline{B(\phi)}$ can be done easily by examining the corresponding edges of $G$. If $T_i$ and $T_j$ are the two nonzero entries of a column of $\overline{B(\phi)}$ and $f_i, f_j$ are the corresponding edges of $g$ (that is, $\theta(T_i) = f_it$,and $\theta(T_j)=f_jt)$), then the mid-point of the column is ${\mbox{\rm supp}}f_i \cap {\mbox{\rm supp}}f_j$, or equivalently $gcd(f_i,f_j)$. 

The next lemma collects information that can be gleaned about a graph from minors of $\overline{B(\phi)}$ of a form that will appear frequently in the remainder of the article.

\begin{lemma}
	\label{minor.form} 
	If $\overline{B(\phi)}$ has a minor of the form 
	\begin{align}
	M =  \left[\begin{array}{cccc}
	T_2 & 0 & \ldots & -T_{2t-1} \\
	-T_1 & T_4 & \ldots & 0 \\
	0 & -T_3 & \ldots & 0 \\
	\vdots & \vdots & \ldots& \vdots \\
	0& 0& \ldots & T_{2t}
	\end{array}\right], \label{eq.M}
	\end{align}
	then $G$ contains an even closed walk corresponding to $\det M$. Moreover, 
	\begin{enumerate}
		\item the walk is primitive if and only if the columns of $M$ are pairwise center-distinct; and
		\item the walk is a circuit if and only if the nonzero entries of $M$ are distinct, in which case the circuit is indecomposable if and only if the columns of $M$ are pairwise center-distinct.
	\end{enumerate}
\end{lemma}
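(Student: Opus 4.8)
The plan is to first compute $\det M$ explicitly, then decode the combinatorial data carried by the columns of $M$ to exhibit the even closed walk, and finally read off primitivity, the circuit property, and indecomposability from that decoding. For the determinant, note that the nonzero entries of $M$ occupy positions forming a single $2t$-cycle in the row--column incidence pattern: column $j$ meets rows $j$ and $j+1$ for $j<t$, while column $t$ meets rows $t$ and $1$. Hence $M$ has exactly two transversals. The ``main diagonal'' $(j,j)$ contributes $\prod_{i=1}^{t}T_{2i}$ with the identity permutation, and the ``off diagonal'' consisting of $(j{+}1,j)$ for $j<t$ together with $(1,t)$ contributes $(-1)^{t}\prod_{i=1}^{t}T_{2i-1}$ via a $t$-cycle of sign $(-1)^{t-1}$. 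Tracking the two signs gives
\[
\det M = \prod_{i=1}^{t} T_{2i} - \prod_{i=1}^{t} T_{2i-1},
\]
a binomial with two equal-degree monomials; this sign bookkeeping is routine and I would simply state it.

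Next I would decode the columns. By Corollary \ref{cor.reducedB} and the discussion preceding the definition of center-distinctness, the two nonzero entries $T_a,\pm T_b$ of a column of $\overline{B(\phi)}$ lie in the rows indexed by the two \emph{endpoints} of the length-$2$ path with edges $f_a,f_b$, while the shared vertex $c=\gcd(f_a,f_b)$ (the center) is \emph{not} among those rows. Writing $u_1,\dots,u_t$ for the vertices indexing the rows of $M$ and $c_j$ for the center of column $j$, the entries force $f_{2j}=\{u_j,c_j\}$ and $f_{2j-1}=\{u_{j+1},c_j\}$ for $j<t$, and $f_{2t}=\{u_t,c_t\}$, $f_{2t-1}=\{u_1,c_t\}$. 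Concatenating these edges produces the closed walk $W:\ u_1,c_1,u_2,c_2,\dots,u_t,c_t,u_1$ of length $2t$, in which the alternating edge-classes $\{f_{2i}\}$ and $\{f_{2i-1}\}$ are exactly the two monomials of $\det M$, as in Remark \ref{cycles.eqtns}; this proves the first assertion. The structural point I would emphasize, used repeatedly below, is that $u_1,\dots,u_t$ label \emph{distinct} rows of the submatrix $M$ and so are pairwise distinct vertices, whereas the centers $c_1,\dots,c_t$ are unconstrained and may coincide.

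For part (1), I would use that $W$ is primitive iff it has no proper closed even subwalk, and that such a subwalk is precisely a pair of positions of $W$ of the same parity carrying the same vertex. In $W$ the odd positions carry $u_1,\dots,u_t$ and the even positions carry $c_1,\dots,c_t$; since the $u_i$ are automatically distinct, no same-parity repeat can occur at odd positions, so $W$ is primitive iff the $c_j$ are pairwise distinct, i.e. the columns are pairwise center-distinct. For part (2), the edges of $W$ are $f_1,\dots,f_{2t}$, indexed by the nonzero entries $\pm T_1,\dots,\pm T_{2t}$ of $M$, so $W$ is a circuit iff these entries are distinct. Granting this, I would prove indecomposability $\iff$ center-distinctness by contrapositives: if $c_j=c_{j'}$ with $j<j'$, the common center splits $W$ into two closed subwalks of even lengths $2(j'-j)$ and $2(t-(j'-j))$, which are edge-disjoint even circuits since $W$ is a circuit, so $W$ decomposes; conversely, if $W$ is an edge-disjoint union of two smaller even circuits joined at a vertex $w$, then $w$ occurs at two positions differing by the even length of the first piece, hence at two same-parity positions, which must be even positions (the odd positions are the distinct $u_i$), forcing $c_j=c_{j'}$.

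The main obstacle is not the determinant but the decoding step and the two same-parity arguments. Within these, the delicate point is making ``realized as the edge-disjoint union of two smaller even circuits'' precise enough to locate the junction vertex $w$ and certify that its two occurrences share parity; I would pin down that decomposing a closed trail always amounts to splitting the cyclic edge-sequence at a repeated vertex. Along the way I would also confirm that $W$ is a genuine walk, i.e. $f_{2j}\neq f_{2j-1}$ and $u_j\neq c_j$, which holds because a length-$2$ path has three distinct vertices and two distinct edges (cf.\ Lemma \ref{lem.phi}).
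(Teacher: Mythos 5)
Your decoding of $M$ and the determinant computation are exactly the paper's argument: each column of $M$ is read as a path of length $2$ whose endpoints label the rows, the $t$ paths glue into a closed walk of length $2t$, $\det M=\prod_{i=1}^{t}T_{2i}-\prod_{i=1}^{t}T_{2i-1}$ is the walk's binomial as in Remark \ref{cycles.eqtns}, and the walk is a circuit exactly when the entries are distinct. Your parity argument for part (1) and your forward direction of part (2) (repeated center $\Rightarrow$ decomposable) are more detailed versions of what the paper asserts in a single sentence.

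The gap is in the converse half of part (2), at precisely the point you flagged as delicate. The bridging claim you propose to ``pin down'' --- that decomposing a closed trail always amounts to splitting the cyclic edge-sequence at a repeated vertex --- is false. Take two edge-disjoint $4$-cycles $C_1=v,w,b,a,v$ and $C_2=v,c,w,d,v$ and traverse their union as
$$W:\quad v,\ w,\ c,\ v,\ a,\ b,\ w,\ d,\ v.$$
Then $W$ is a circuit of length $8$ whose odd-position vertices $v,c,a,w$ are distinct and whose even-position vertices (the centers) $w,v,b,d$ are also distinct, so $W$ arises from a $4\times 4$ minor exactly of the form (\ref{eq.M}) with distinct entries and pairwise center-distinct columns. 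Nevertheless $E(W)=E(C_1)\sqcup E(C_2)$: the circuit is an edge-disjoint union of two smaller even circuits, but the two pieces interleave in the traversal (their edges occupy positions $\{1,4,5,6\}$ and $\{2,3,7,8\}$) rather than forming contiguous segments, and $W$ has no same-parity vertex repetition at which to cut. So your chain ``decomposable $\Rightarrow$ junction vertex at two same-parity positions $\Rightarrow$ two equal centers'' breaks under the literal edge-set reading of ``realized as the edge-disjoint union,'' and under that reading the implication ``centers pairwise distinct $\Rightarrow$ indecomposable'' is not merely unproved but false. The lemma is correct, and your proof goes through essentially verbatim with the false claim deleted, only if decomposability is read as: the cyclic vertex-edge sequence splits into two closed subwalks of even length (equivalently, some vertex occurs at two positions of the same parity) --- the same contiguous reading your part (1) already uses for ``subwalk,'' and the reading the paper implicitly adopts, since its own proof of this step is the unproved sentence ``This guarantees that the obtained closed walk or circuit \dots is primitive or indecomposable respectively if and only if the columns of $M$ are pairwise center-distinct.'' In short: adopt the walk-splitting definition and your same-parity argument is immediate; do not try to derive it from a general fact about closed trails, because no such fact holds.
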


\begin{proof}
Combining equation (\ref{eq.Bphi}) with Lemma \ref{lem.phi} gives that each column of $M$ corresponds to a path of length 2 in $G$ whose end-vertices are labeled by the rows of $M$ corresponding to the nonzero entries in that column. By re-indexing the variables, we may assume that the rows of $M$ correspond to the variables $x_1, \dots, x_t$. Then, the $i$th column of $M$, for $1 \le i \le t-1$, corresponds to a path of length 2 from $x_i$ to $x_{i+1}$, and the last column of $M$ corresponds to a path of length 2 from $x_t$ to $x_1$. We shall denote those paths by $x_i, y_i, x_{i+1}$, for $i = 1, \dots, t-1$, and $x_{t}, y_{t}, x_1$. Furthermore, edges on these paths correspond to the variables $T_1, \dots, T_{2t}$. Hence, these paths glue together to form an even closed walk of length $2t$ in $G$. Since $\det M =\prod_{i=1}^t T_{2i} - \prod_{i=1}^t T_{2i-1}$ we have that $\det M$ corresponds to an even closed walk in $G$, as in Remark~\ref{cycles.eqtns}. This walk is a circuit if and only if the $T_i$, and thus the corresponding edges, are distinct. Finally, the columns of $M$ are pairwise center-distinct if and only if the vertices $y_1, \dots, y_t$ are pairwise distinct. Note that the vertices $x_1, \ldots, x_t$ are distinct by definition. This guarantees that the obtained closed walk or circuit of length $2t$ in $G$ is primitive or indecomposable respectively if and only if the columns of $M$ are pairwise center-distinct.	
\end{proof}

We are now ready to state the main result of the paper. Note that relabeling the vertices or edges of a graph corresponds to rearranging the rows of $\phi$ or of $B(\phi)$. Such a rearrangement will not affect the minors of a matrix, so when convenient, a specific labeling of vertices can be used without loss of generality.

\begin{theorem}\label{thm.main}
Let $G$ be a graph. Then the indecomposable even circuits of $G$ correspond exactly to the binomial minors of $\overline{B(\phi)}$ which satisfy the following conditions:
\begin{enumerate}
\item the monomials in these binomials are square-free and relatively prime; and
\item the columns of the corresponding submatrices are pairwise center-distinct.
\end{enumerate}
\end{theorem}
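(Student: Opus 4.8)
The plan is to reduce the theorem to Lemma \ref{minor.form} by showing that conditions (1) and (2), together with the binomial hypothesis, force any qualifying submatrix of $\overline{B(\phi)}$ to coincide, after reordering rows and columns, with a matrix $M$ of the form \eqref{eq.M}. First I would record the shape of $\overline{B(\phi)}$ from Corollary \ref{cor.reducedB}: every nonzero column has exactly two nonzero entries, each of the form $\pm T_k$, and these sit in the two rows indexed by the endpoints of the corresponding path of length $2$ in $G$; moreover, the entry in the row of one endpoint is (up to sign) the variable of the edge meeting the \emph{other} endpoint. Fix a $t\times t$ submatrix $N$ whose determinant is the minor under consideration; we may assume $N$ has no zero row or column, since otherwise the minor vanishes. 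I would then encode $N$ by an auxiliary multigraph $\Gamma$ on the $t$ chosen vertices (rows): each chosen column becomes an edge of $\Gamma$ joining its two endpoints when both lie among the chosen rows, and becomes a half-edge attached to its unique chosen endpoint otherwise. Thus $\Gamma$ has $t$ vertices and $t$ edges, counting half-edges.

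Next I would expand $\det N$ as a signed sum over the perfect matchings of the bipartite incidence of chosen rows against chosen columns; each matching assigns every column (edge of $\Gamma$) to one of its chosen endpoints and contributes the monomial obtained by reading off the associated $T$-variables. A standard counting argument shows such an assignment exists only if every connected component of $\Gamma$ has at least as many edges as vertices; since the totals are equal, each component has \emph{exactly} as many edges as vertices, hence is either unicyclic (with no half-edge) or a tree carrying a single half-edge. A component of the second type, and likewise every tree branch hanging off the cycle of a unicyclic component, forces its assignment uniquely and therefore contributes the \emph{same} nonconstant monomial factor to every surviving term of $\det N$. A pure cycle, by contrast, admits exactly two assignments, corresponding to its two orientations, which contribute the two alternating products of the $T$-variables around the cycle.

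At this point conditions (1) and (2) do the work. If any component has a half-edge or a nontrivial tree part, the forced common factor is a nonconstant monomial dividing both monomials of the (binomial) determinant, contradicting relative primality; and if $\Gamma$ has two or more cyclic components, then $\det N$ factors as a product of at least two binomials, which, once square-freeness rules out cancellation, has at least four terms and cannot be binomial. Therefore $\Gamma$ is a single cycle, so $N$ has, after reordering, the form $M$ of \eqref{eq.M} and $\det N = \prod_{i=1}^t T_{2i} - \prod_{i=1}^t T_{2i-1}$. The square-free and relatively prime hypotheses say exactly that the $2t$ variables $T_1,\dots,T_{2t}$ are pairwise distinct, which by Lemma \ref{minor.form}(2) means the associated even closed walk is a circuit, while condition (2) is precisely center-distinctness and hence, by the same lemma, is equivalent to indecomposability. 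Conversely, given an indecomposable even circuit $x_1,y_1,x_2,\dots,x_t,y_t,x_1$, I would read its $t$ constituent subpaths $x_i,y_i,x_{i+1}$ as columns and the $t$ vertices $x_1,\dots,x_t$ at which consecutive subpaths meet as rows to build a submatrix of the form $M$; Lemma \ref{minor.form} then guarantees its determinant is a binomial minor satisfying (1) and (2), and tracking the edge and vertex labels shows the two assignments are mutually inverse, giving the claimed exact correspondence.

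The main obstacle I anticipate is the combinatorial determinant analysis of the third paragraph: one must argue cleanly that \emph{every} binomial minor satisfying (1) arises from a single cycle, ruling out the possibility that sign cancellations among many permutation terms collapse a more complicated $\Gamma$ into something binomial. Relative primality and square-freeness are exactly what prevent this, but making the reduction airtight—particularly the bookkeeping for half-edges and for shared tree factors, and the no-cancellation claim for multiple cyclic components—is the delicate step, and is presumably the ``ad hoc analysis of the possible forms of minors'' that the authors allude to.
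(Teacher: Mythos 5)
Your proposal is correct in substance and follows the paper's skeleton: both directions reduce to Lemma \ref{minor.form} and the canonical form \eqref{eq.M}, and your forward direction (reading the $t$ subpaths of the circuit as columns and the $t$ odd-indexed vertices as rows) is exactly the paper's. Where you diverge is the mechanism of the converse. The paper argues by direct entry-counting: relative primality forces every column, and then every row, of the submatrix to have exactly two nonzero entries (a column or row with a single nonzero entry would contribute a common factor to both monomials), and---since two square-free, relatively prime degree-$t$ monomials involve $2t$ distinct variables, which must be supplied by the $2t$ nonzero entries---all entries are \emph{distinct}; the submatrix is then block-diagonal with blocks of the form \eqref{eq.M}, and distinctness makes it immediate that two or more blocks would give four distinct terms, hence not a binomial. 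Your multigraph $\Gamma$ with half-edges, the unicyclic/tree-plus-half-edge classification, and the assignment expansion of the determinant is a more elaborate but equivalent way of seeing the same forced common factors and the same block structure; it buys a slightly more conceptual picture at the cost of bookkeeping. The one step where your route is genuinely weaker as written is the exclusion of two or more cyclic components: since at that stage you have not shown the entries are distinct, a product $(A-B)(C-D)$ of two binomials need not have four distinct terms --- the coincidence $AC=BD$ yields the trinomial $2AC-AD-BC$, and in characteristic $2$ even the binomial $AD+BC$; one must then use relative primality (which forces $A=D$, $B=C$) and square-freeness (which rules out $A^2+B^2$) to close the argument, rather than the blanket claim of ``at least four terms.'' This is repairable, and indeed disappears entirely if you import the paper's observation that conditions (1) force all $2t$ entries to be distinct \emph{before} analyzing the block decomposition; you flagged precisely this point as the delicate one, so the issue is one of precision rather than conception.
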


\begin{proof}
Suppose that $C$ is an indecomposable even circuit in $G$. For ease of notation, select a labeling on the vertices and edges so that the edges of $C$ (in order) are $e_1, \dots, e_{2t}$, where $e_i=x_ix_{i+1}$ for $i<2t$ and $e_{2t}=x_{2t}x_1$. Since $C$ is indecomposable, it is easy to see that $x_1, x_3, \dots, x_{2t-1}$ are pairwise distinct. Particularly, the linear relations of $I = I(G)$ include $x_1e_2-x_3e_1, x_3e_4-x_5e_3, \ldots , x_{2t-1}e_{2t}-x_1e_{2t-1}$ which correspond to the following columns of $\phi$:
$$\left[\begin{array}{cccc}
	-x_3 & 0 & \ldots & 0  \\
	x_1 & 0 & \ldots & 0 \\
	0 & -x_5 & \ldots & 0 \\
	0 & x_3 & \ldots & 0  \\
	\vdots & \vdots & \ldots & \vdots \\
	\vdots & \vdots & \ldots & -x_1 \\
	0& 0 & \ldots & x_{2t-1} \\
	\vdots & \vdots & \ldots & \vdots
	\end{array}\right]$$
where for convenience, labelings were chosen so that $T_i$ corresponds to $e_i$ for $1 \leq i \leq 2t$.

We can reorder the columns of $\phi$ so that these are the first $t$ columns. These columns produce  $x_1T_2-x_3T_1, x_3T_4-x_5T_3, \ldots , x_{2t-1}T_{2t}-x_1T_{2t-1}$  as linear equations of the Rees algebra $R[It]$, which correspond to the first $t$ equations of $[x_1 \ \dots \ x_n]\cdot B(\phi)$. Thus, the first $t$ columns of $B(\phi)$ are:
	$$\left[\begin{array}{cccc}
	T_2 & 0 & \ldots & -T_{2t-1} \\
	0 & 0 &  \ldots & 0 \\
	-T_1 & T_4 & \ldots & 0  \\
	0 & 0 & \ldots & 0\\
	0 & -T_3 & \ldots & 0 \\
	\vdots & \vdots & \ldots & \vdots \\
	0& 0& \ldots & T_{2t} \\
		\vdots & \vdots & \ldots & \vdots
	\end{array}\right].$$
By Corollary \ref{cor.reducedB}, these columns of $B(\phi)$ are unchanged when passing to $\overline{B(\phi)}$. Consider the $t \times t$ submatrix $M$ of $\overline{B(\phi)}$ consisting of the first $t$ columns and the $t$ identified nonzero rows:
$$\begin{array}{cccc}
M =  \left[\begin{array}{cccc}
	T_2 & 0 & \ldots & -T_{2t-1} \\
	-T_1 & T_4 & \ldots & 0 \\
	0 & -T_3 & \ldots & 0 \\
	\vdots & \vdots & \ldots& \vdots \\
	0& 0& \ldots & T_{2t}
	\end{array}\right]. 
	\end{array}$$
Then, $\det(M)=\prod_{i=1}^t T_{2i} - \prod_{i=1}^t T_{2i-1}$. This is a binomial whose monomials are square-free and relatively prime. Observe further that, since $C$ is indecomposable, $x_2, x_4, \dots, x_{2t}$ are pairwise distinct. Therefore, the columns of $M$ are pairwise center-distinct.

Conversely, suppose that $M$ is a $t \times t$ submatrix of $\overline{B(\phi)}$ whose determinant is a binomial of degree $t$ with square-free and relatively prime monomials, and whose columns are pairwise center-distinct. It follows from Lemma \ref{lem.phi} and Corollary \ref{cor.reducedB} that each column of $M$ contains at most 2 nonzero entries. Since the monomials in $\det(M)$ are relatively prime, each column of $M$ must contain exactly 2 nonzero entries. Particularly, $M$ contains exactly $2t$ nonzero entries.
Also, since the monomials in $\det(M)$ are relatively prime, $\det(M)$ contains exactly $2t$ distinct variables. Thus, all the $2t$ nonzero entries of $M$ are distinct. Since each row also contains at least $2$ distinct entries in order for the monomials to be relatively prime, a simple counting argument guarantees exactly two nonzero entries per row as well.

Now, by rearranging the rows and columns of $M$, it is easy to put $M$ in a block-matrix form, where each nonzero block is of the form as in (\ref{eq.M}) and lies on the diagonal. Observe further that if $M$ has more than one such block, then $\det(M)$ is not a binomial since all entries of $M$ are distinct. Therefore, we can assume that $M$ is exactly as in (\ref{eq.M}). The result now follows from Lemma~\ref{minor.form}.
\end{proof}

Theorem \ref{thm.main} gives us the following algebraic algorithm to detect the existence of and list all even circuits in a given graph $G$.

\begin{algorithm} \label{alg}
	To enumerate all indecomposable even circuits in a given graph $G$:
\begin{enumerate}
	\item Form $\phi$.
	\item Compute $\overline{B(\phi)}$.
	\item for $t$ from $1$ to the rank of $\overline{B(\phi)}$ compute all $t \times t$ submatrices of $\overline{B(\phi)}$.
	\item Test if each submatrix satisfies the following conditions:
\begin{enumerate}
	\item its columns are pairwise center-distinct; and
	\item its determinant is a binomial whose monomials are square-free and relatively prime.
\end{enumerate}
	\item If the answer is ``Yes'' then return the rows and centers of the columns corresponding to the minor. These are the vertices of an indecomposable even circuit in $G$.
\end{enumerate}
\end{algorithm}

\begin{remark}
	Note that only the linear columns of $\phi$ are necessary in this process and so in step 1 only the linear relations need be considered. Note also that the Taylor resolution and its presentation matrix can be constructed in polynomial time on the number of generators of $I(G)$ (i.e., the number of edges in $G$). Note further that the computation of the determinant of a matrix can also be done in polynomial time on the size of the matrix, and the rank of $\overline{B(\phi)}$ is at most the number of edges in $G$. Finally, testing if the columns of a minor in $\overline{B(\phi)}$ are center-distinct can be done in polynomial time on the size of the minor, which is bounded by the number of edges in $G$. Thus, Algorithm \ref{alg} runs in polynomial time on the size of the edge set of $G$.
\end{remark}

\begin{example} \label{ex.1}
	Consider the following graph.
	
	\begin{center}
		\begin{tikzpicture}
		
		\tikzstyle{point}=[circle,thick,draw=black,fill=black,inner sep=0pt,minimum width=4pt,minimum height=4pt]
		
		\node [label={[xshift=0cm, yshift=0cm]: $G:$}] at (1.5,0.8){};
		\node [label={[xshift=-0.4cm, yshift= -.10 cm]: $T_1$}] at (3,0.8){};
		\node [label={[xshift=-0.4cm, yshift= -.10 cm]: $T_2$}] at (4.2,1){};
		\node [label={[xshift=-0.4cm, yshift= -.10 cm]: $T_3$}] at (4.2,0.3){};
		\node [label={[xshift=-0.4cm, yshift= -.10 cm]: $T_4$}] at (6.8,1){};
		\node [label={[xshift=-0.4cm, yshift= -.10 cm]: $T_6$}] at (6.8,0.3){};
		\node [label={[xshift=0.4cm, yshift= -.10 cm]: $T_5$}] at (7,0.8){};
		
		\node (a)[point,label={[xshift=-0.4cm, yshift=-.10 cm]: $x_1$}] at (3,0) {};
		
		\node (b)[point,,label={[xshift=-0.4cm, yshift=-.10 cm]:$x_2$}] at (3,2) {};
		
		\node (c)[point,,label={[xshift=0cm, yshift=0.1 cm]: $x_3$}] at (5,1) {};
		\node(d)[point,,label={[xshift=0.4cm, yshift=-.20 cm]:$x_4$}] at (7,2) {};
		
		\node (e)[point,label={[xshift=0.4cm, yshift=-.10 cm]:$x_5$}] at (7,0) {};
		
		\draw (a.center) -- (b.center) -- (c.center) -- (a.center);
		\draw (c.center) -- (d.center) -- (e.center) -- (c.center);
		\end{tikzpicture}
	\end{center}
	
	The nonzero columns of the reduced Jacobian dual of $G$ corresponding to the Taylor presentation matrix $\phi$ are computed to be:
	$$\overline{B(\phi)}: \quad \quad \begin{array}{c} x_1 \\ x_2 \\ x_3 \\ x_4 \\ x_5 \end{array}
	\left[\begin{array}{cccccccccc} -T_2 & T_2 & 0 & 0 & 0 & 0 & 0 & 0 & -T_4 & -T_6 \\
	0 & -T_3& T_3 & 0 & 0 & 0 & -T_4 & -T_6 & 0 & 0 \\
	T_1 & 0 & -T_1 & -T_5 & T_5 & 0 & 0 & 0 & 0 & 0 \\
	0 & 0 & 0 & 0 & -T_6 & T_6 & T_2 & 0 & T_3 & 0 \\
	0 & 0 & 0 & T_4 & 0 & -T_4 & 0 & T_2 & 0 & T_3 \end{array} \right],$$
	where the labels $x_1, \dots, x_5$ indicate the variables of the corresponding rows. Furthermore, the mid-points of the columns are successively $x_2, x_3, x_1, x_4, x_5, x_3, x_3, x_3, x_3, x_3$.	
	By evaluating the minors of $\overline{B(\phi)}$, the only binomial minor whose monomials are square-free and relatively prime is $T_1T_4T_6 - T_2T_3T_5$, which corresponds to the only indecomposable even circuit $x_1, x_2, x_3, x_4, x_5, x_3, x_1$ in $G$. This minor appears using the submatrix formed by taking rows $1, 3, 4$ and columns $1, 5, 9$, for example, or the one formed by rows $2,3,4$ and columns $3,5,7$.
\end{example}

\begin{remark}
	There can be binomial minors of $\overline{B(\phi)}$ whose monomials are neither square-free nor relatively prime. These minors may correspond to even closed walks which transverse an edge multiple times.
\end{remark}

\begin{example} \label{ex.2}
	Consider the following graph.
	\begin{center}
		\begin{tikzpicture}
		
		\tikzstyle{point}=[circle,thick,draw=black,fill=black,inner sep=0pt,minimum width=4pt,minimum height=4pt]
		
		\node [label={[xshift=0cm, yshift=0cm]: $G:$}] at (1.5,0.8){};
		\node [label={[xshift=-0.4cm, yshift= -.10 cm]: $T_1$}] at (3,0.8){};
		\node [label={[xshift=-0.4cm, yshift= -.10 cm]: $T_2$}] at (4.2,1){};
		\node [label={[xshift=-0.4cm, yshift= -.10 cm]: $T_3$}] at (4.2,0.3){};
		\node [label={[xshift=-0.4cm, yshift= -0.8 cm]: $T_4$}] at (6,1){};
		\node [label={[xshift=-0.4cm, yshift= -.10 cm]: $T_5$}] at (7.8,1){};
		\node [label={[xshift=-0.4cm, yshift= -.10 cm]: $T_7$}] at (7.8,0.3){};
		\node [label={[xshift=0.4cm, yshift= -.10 cm]: $T_6$}] at (8,0.8){};
		
		\node (a)[point,label={[xshift=-0.4cm, yshift=-.10 cm]: $x_1$}] at (3,0) {};
		
		\node (b)[point,,label={[xshift=-0.4cm, yshift=-.10 cm]:$x_2$}] at (3,2) {};
		
		\node (c)[point,,label={[xshift=0cm, yshift=0.1 cm]: $x_3$}] at (5,1) {};
		\node (d)[point,,label={[xshift=0cm, yshift=0.1 cm]: $x_4$}] at (6,1) {};
		\node(e)[point,,label={[xshift=0.4cm, yshift=-.20 cm]:$x_5$}] at (8,2) {};
		\node (f)[point,label={[xshift=0.4cm, yshift=-.10 cm]:$x_6$}] at (8,0) {};
		
		\draw (a.center) -- (b.center) -- (c.center) -- (a.center);
		\draw (c.center) -- (d.center) -- (e.center) -- (f.center) -- (d.center);
		\end{tikzpicture}
	\end{center}
	
	The nonzero columns of the reduced Jacobian dual of $I(G)$ with respect to theTaylor presentation matrix $\phi$ are computed to be:
	$$\overline{B(\phi)}: \quad \quad \begin{array}{c} x_1 \\ x_2 \\ x_3 \\ x_4 \\ x_5 \\ x_6 \end{array}
	\left[\begin{array}{cccccccccc} -T_2 & T_2 & 0 & 0 & -T_4 & 0 & 0 & 0 & 0 & 0 \\
	0 & -T_3& T_3 & T_4 & 0 & 0 & 0 & 0 & 0 & 0 \\
	T_1 & 0 & -T_1 & 0 & 0 &-T_5 & -T_7 & 0 & 0 & 0 \\
	0 & 0 & 0 & -T_2 & T_3 & 0 & 0 & -T_6 & T_6 & 0 \\
	0 & 0 & 0 & 0 & 0 & T_4 & 0 & 0 & -T_7 & T_7 \\
	0 & 0 & 0 & 0 & 0 & 0 & T_4 & T_5 & 0 & -T_5 \end{array} \right],$$
	where the labels $x_1, \dots, x_6$ indicate the variables of the corresponding rows. Furthermore, the mid-points of the columns are successively $x_2, x_3, x_1, x_3, x_3, x_4, x_4, x_5, x_6, x_4$.
	
	The only binomial minor of $\overline{B(\phi)}$ is $T_2T_3T_5T_7 - T_1T_4^2T_6$. This corresponds to submatrices formed using rows $1,3,4,5$ and columns $1,5,6,9$ for example, or rows $2,3,4,6$ and columns $3,4,7,8$. A monomial of this minor is not square-free. This indicates that $G$ contains an even closed walk $x_2, x_3, x_4, x_5, x_6, x_4, x_3, x_1, x_2$, but this walk is not a circuit because it transverses through the edge $T_4$ twice. Hence, $G$ has no indecomposable even circuits.
\end{example}

\begin{remark} \label{rmk.cycle}
	With basically the same proof as that of Theorem \ref{thm.main}, it can be shown that the even cycles of $G$ correspond exactly to the binomial minors of $\overline{B(\phi)}$ which satisfy the following conditions:
	\begin{enumerate}
		\item the monomials in these binomials are square-free and relatively prime; and
		\item the variables labeling the rows and the mid-points of the columns of the corresponding submatrices are pairwise distinct.
	\end{enumerate}
\end{remark}

\begin{example} Let $G$ be the graph in Example \ref{ex.1}. As shown in Example \ref{ex.1}, the only binomial minor of $\overline{B(\phi)}$ whose monomials are square-free and relatively prime is $T_1T_4T_6 - T_2T_3T_5$. This minor is obtained by taking rows 1, 3, 4 and columns 1, 5, 9. In this minor, the mid-points of the columns are $x_2, x_5$ and $x_3$. On the other hand, the variables labeling the rows are $x_1, x_3$ and $x_4$. Clearly, we have a repeated $x_3$ among the mid-points and the row labels. Thus, this minor corresponds to an even indecomposable circuit, which is not a cycle.
\end{example}

\begin{example} Let $G$ be the graph in Example \ref{ex.twosquares}. A binomial minor of $\overline{B(\phi)}$ whose monomials are square-free and relatively prime is $T_2T_5T_7-T_1T_3T_6$. This minor is obtained by taking rows 1, 3, 5 and columns 1, 5, 9. The mid-points of the columns are $x_2, x_4$ and $x_6$, and the variables labeling the rows are $x_1, x_3$ and $x_5$. Since these are distinct variables, this minor corresponds to an even cycle $x_1, x_2, x_3, x_4, x_5, x_6, x_1$.
\end{example}

We continue this section by discussing an algebraic consequence of Theorem \ref{thm.main} in finding defining equations for the Rees algebras of edge ideals of graphs. Recall that the Rees algebra $R[It]$ of $I$ has a presentation $R[T_1, \dots, T_r] \stackrel{\theta}{\rightarrow} R[It] \rightarrow 0$, and $J = \text{ker } \theta$ is called the defining ideal of $R[It]$.

It was shown in \cite{Rafael1995} that the nonlinear equations of $J$ arise from the even closed walks in the graph $G$. An alternate proof of this fact appears in Chapter 10.1 of \cite{HH}. Also, it was proved in \cite[Corollary 10.1.5]{HH} that the generators correspond to primitive even closed walks and form a reduced Gr\"{o}bner basis for $J$. The binomials corresponding to indecomposable even circuits of $G$ are thus known to be elements of $J$. However, there are elements of $J$ that do not correspond to indecomposable circuits, as seen in Example \ref{ex.2}. It is worth noting that it was established in a more general setting that the maximal minors of $\overline{B(\phi)}$ are contained in $J$ (see, for example, \cite{UV}).
A close examination of the proof of Theorem~\ref{thm.main} shows that any even closed walk in $G$ corresponds to a binomial minor of $\overline{B(\phi)}$.

\begin{corollary} \label{cor.J}
Let $G$ be a graph with edge ideal $I = I(G)$, and let $J$ be the defining ideal of $R[It]$. Then, every nonlinear generator of $J$ appears as a binomial minor of $\overline{B(\phi)}$.
\end{corollary}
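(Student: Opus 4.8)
The plan is to reduce to the known classification of the generators of $J$ and then to realize each generator as the determinant of an explicit cyclic submatrix of $\overline{B(\phi)}$, reusing the matrix analysis already carried out for Theorem~\ref{thm.main} and Lemma~\ref{minor.form}. By \cite[Corollary 10.1.5]{HH}, the nonlinear generators of $J$ are exactly the binomials $\prod_{i=1}^s T_{2i} - \prod_{i=1}^s T_{2i-1}$ attached (as in Remark~\ref{cycles.eqtns}) to the primitive even closed walks $W\colon x_1, e_1, x_2, \dots, e_{2s}, x_1$ of $G$. Since a generator is nonlinear we have $s \ge 2$, and it suffices to produce, for each such $W$, an $s \times s$ submatrix $M$ of $\overline{B(\phi)}$ whose determinant equals the associated binomial.

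First I would extract the combinatorial input that makes the matrix work, namely that primitivity forces the two parity classes of vertices to be repetition-free. If $x_{2i-1} = x_{2j-1}$ for some $1 \le i < j \le s$, then the intermediate portion of $W$ is a closed subwalk of even length $2(j-i)$ with $0 < 2(j-i) < 2s$, i.e. a proper even closed subwalk, contradicting primitivity; the same argument applies to the even-position vertices. Hence $x_1, x_3, \dots, x_{2s-1}$ are pairwise distinct and $x_2, x_4, \dots, x_{2s}$ are pairwise distinct. In particular each consecutive triple $x_{2i-1}, x_{2i}, x_{2i+1}$ consists of three distinct vertices (using also that $G$ is loopless), so it is a genuine path of length $2$ centered at $x_{2i}$ and therefore, by Lemma~\ref{lem.phi}(4) and Corollary~\ref{cor.reducedB}, indexes a nonzero column of $\overline{B(\phi)}$.

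Next I would assemble $M$ from the $s$ columns associated to the paths centered at $x_2, x_4, \dots, x_{2s}$ and the $s$ rows indexed by $x_1, x_3, \dots, x_{2s-1}$; both selections are legitimate because the midpoints (resp. the row labels) are distinct, so the chosen columns are in fact pairwise center-distinct. The column for the path $x_{2i-1}, x_{2i}, x_{2i+1}$ has its two nonzero entries (the indeterminates attached to $e_{2i-1}$ and $e_{2i}$) precisely in rows $x_{2i-1}$ and $x_{2i+1}$; since the odd-position vertices are distinct, row $x_{2i-1}$ meets only the columns centered at $x_{2i}$ and $x_{2i-2}$. Thus $M$ has exactly the cyclic bidiagonal shape displayed in \eqref{eq.M}, and Lemma~\ref{minor.form} gives $\det M = \prod_{i=1}^s T_{2i} - \prod_{i=1}^s T_{2i-1}$, the generator we started with.

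The step I expect to be the main obstacle is keeping the bookkeeping honest when $W$ repeats an edge, so that the binomial need not be square-free (as in Example~\ref{ex.2}, where $T_4^2$ occurs). The crucial point I would stress is that the cyclic structure of $M$ — and hence the two-term determinant of Lemma~\ref{minor.form} — depends only on the distinctness of the vertices within each parity class, never on the distinctness of the edges; a repeated edge merely means two positions of $W$ carry the same indeterminate $T_k$, which alters the monomials of $\det M$ but neither the support shape of $M$ nor the validity of the determinant computation. This is exactly why the argument that produced indecomposable even circuits in Theorem~\ref{thm.main} extends to all primitive even closed walks, and therefore to every nonlinear generator of $J$.
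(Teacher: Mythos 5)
Your proof is correct and follows essentially the same route as the paper: reduce to primitive even closed walks via \cite{Rafael1995} and \cite[Chapter 10.1]{HH}, split the walk into its $s$ length-two paths, select the corresponding columns and the rows labeled by the odd-position vertices, and conclude via Lemma~\ref{minor.form}. The only difference is cosmetic and in your favor: where the paper rearranges the submatrix into blocks of the form \eqref{eq.M} and rules out multiple blocks by a primitivity contradiction, you verify the cyclic shape directly from the within-parity distinctness of the vertices (a fact the paper asserts but which you actually prove via the even-length subwalk argument).
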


\begin{proof}
By \cite{Rafael1995} and \cite[Chapter 10.1]{HH}, we have that the nonlinear generators of $J$ correspond to primitive even closed walks in $G$. Consider any primitive even closed walk $W$ in $G$ and, after a re-labeling, suppose that the vertices on $W$ are $x_1, \dots, x_{2t}$ (not necessarily distinct).

Observe that, since $W$ is primitive, if $x_i=x_j$ is a repeated vertex in $W$ then $i$ and $j$ are of different parity. View $W$ as the union of $t$ paths of length 2, namely, $P_i = x_{2i-1}, x_{2i}, x_{2i+1}$, for $i = 1, \dots t$ (where $x_{2t+1} = x_1$). Then, the endpoints of each path $P_i$ are distinct vertices. Thus, $P_i$ corresponds to a column of $\overline{B(\phi)}$ with exactly two nonzero entries, appearing in the rows labeled by $x_{2i-1} \not= x_{2i+1}$. Selecting these columns and the corresponding nonzero rows results in a $t \times t$ submatrix $M_W$ of $\overline{B(\phi)}$.

As in the proof of Theorem~\ref{thm.main}, the rows and columns of $M_W$ can be rearranged so that $M_W$ is a block-matrix in which each block is of the form of (\ref{eq.M}). If there are multiple blocks then each corresponds to an even closed walk contained in $W$ by Lemma~\ref{minor.form}, a contradiction to the fact that $W$ is primitive. Therefore, $M_W$ is of the form (\ref{eq.M}), where the nonzero entries may not be distinct. Hence, the corresponding generator of $J$ is the same as $\det(M_W)$, which is a binomial minor of $\overline{B(\phi)}$.
\end{proof}

Let $\T = [T_1 \ \dots \ T_r]$.  Corollary \ref{cor.J} gives us the containment
$$J \subseteq \big\langle \T \cdot \phi, I_2(\overline{B(\phi)}), I_3(\overline{B(\phi)}), \dots ,I_k(\overline{B(\phi)})\big\rangle,$$
where $k$ is the rank of $\overline{B(\phi)}$. The reverse containment fails to hold. In general, $I_t(\overline{B(\phi)})$ will contain monomials that are not in $J$. For instance, in Example \ref{ex.twosquares}, $T_2T_3 \in I_2(\overline{B(\phi)})$ but $T_2T_3 \not\in J$. Interestingly enough, we shall see that by restricting to binomial minors we in fact obtain an equality. While not all binomial minors of $\overline{B(\phi)}$ are minimal generators of $J$, such minors correspond to multiples of binomials which come from (not necessarily primitive) even closed walks and are elements of $J$. The following lemma will be used in proving the desired equality. For convenience, we consider $1$ to be a trivial monomial.

\begin{lemma}\label{rearrangements}
	Suppose $\psi$ is an $n \times n$ matrix such that:
	\begin{enumerate}
		\item $\det (\psi)$ is a nonzero binomial;
		\item every column of $\psi$ has at most $2$ nonzero entries.
	\end{enumerate}
Then, after row and column rearrangements, $\psi$ has a block decomposition
$$\psi = \left[ \begin{array}{cc}
X & W \\Y & Z
\end{array} \right]$$
where $\det (X)$ is a monomial, $\det (Z)$ is a binomial, $\det (\psi) = \det(X) \det (Z)$, and every row of $Z$ has at least $2$ nonzero entries and every column of $Z$ has exactly $2$ nonzero entries.
\end{lemma}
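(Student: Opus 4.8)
The plan is to reduce $\psi$ by repeatedly stripping off ``forced'' rows and columns, collecting them into $X$, and to show that the surviving submatrix is the desired core $Z$. Throughout, recall that in the present setting the nonzero entries of $\psi$ are monomials (namely the $\pm T_i$), which is what makes the assertion ``$\det(X)$ is a monomial'' meaningful. Concretely, I would run the following procedure: while the current submatrix has a column, or a row, with exactly one nonzero entry, delete that column together with the row of its unique nonzero entry (respectively, that row together with the column of its unique nonzero entry), recording the deleted pair $(r_i,c_i)$ and the entry $\psi_{r_i,c_i}$. Let $(r_1,c_1),\dots,(r_k,c_k)$ be the pairs in the order removed, let $X$ be the submatrix of $\psi$ on the peeled rows and columns, and let $Z$ be the submatrix on the survivors. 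Each step deletes one row and one column, so the process halts in at most $n$ steps; each step is a Laplace expansion along a single-nonzero line, so it multiplies the determinant of the current submatrix by a nonzero monomial, whence all intermediate determinants (in particular $\det Z$) are nonzero and no surviving line is ever identically zero. Since deleting rows only weakens condition (2), every column of $Z$ still has at most two nonzero entries; combined with the halting condition and the nonvanishing, every column of $Z$ has exactly two nonzero entries and every row has at least two (a nonzero-entry count, $Z$ being square, then forces exactly two per row, though only ``at least two'' is required).

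The heart of the argument is the factorization $\det(\psi)=\det(X)\det(Z)$ with $\det(X)$ a monomial. For this I would first record the structural fact forced at the moment each pair is removed: if pair $i$ is removed as a one-entry column, then in $\psi$ the column $c_i$ is nonzero only in rows $\{r_1,\dots,r_i\}$; if it is removed as a one-entry row, then in $\psi$ the row $r_i$ is nonzero only in columns $\{c_1,\dots,c_i\}$. Using these facts I would prove, by induction on $i$, that \emph{every} permutation $\sigma$ contributing a nonzero term to $\det(\psi)$ satisfies $\sigma(r_i)=c_i$ for all $i\le k$: assuming $r_1,\dots,r_{i-1}$ are already matched to $c_1,\dots,c_{i-1}$, a one-entry-column step leaves $r_i$ as the only available neighbor of $c_i$, and a one-entry-row step leaves $c_i$ as the only available neighbor of $r_i$. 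The same induction applied to the support graph of $X$ alone shows that the diagonal $\{(r_i,c_i)\}$ is its \emph{unique} perfect matching, so $\det(X)=\pm\prod_i\psi_{r_i,c_i}$ is a single monomial.

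It then follows that every surviving $\sigma$ matches the peeled rows to the peeled columns along the diagonal and restricts to a perfect matching of $Z$. Reordering the rows as $(r_1,\dots,r_k,\text{survivors})$ and the columns as $(c_1,\dots,c_k,\text{survivors})$ puts $\psi$ into the block form $\psi=\left[\begin{smallmatrix} X & W \\ Y & Z\end{smallmatrix}\right]$ in which every nonzero term of the determinant is the identity on the first $k$ positions; hence $\det(\psi)=\det(X)\det(Z)$ by the standard factorization of the determinant over permutations that fix a block. Finally, $\det(Z)=\det(\psi)/\det(X)$ is a genuine polynomial, and since $\det(X)$ is a nonzero monomial while $\det(\psi)$ is a binomial, multiplication by $\det(X)$ sends distinct monomials to distinct monomials and so neither merges nor splits terms; therefore $\det(Z)$ has exactly two terms and is a binomial (in particular $Z$ is nonempty, since an empty $Z$ would make $\det(\psi)=\det(X)$ a monomial).

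The routine parts are the termination, the degree counts on $Z$, and the final term count. The main obstacle is the second step: turning the forced structure of the peeling into the uniqueness of the matching of $X$ and into the clean block factorization of the determinant, together with the sign bookkeeping that makes $\det(X)$ equal to exactly the recorded monomial rather than merely a monomial up to sign.
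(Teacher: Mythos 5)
Your proof is correct and is essentially the paper's argument: both repeatedly strip off rows/columns having a single nonzero entry into a block $X$ with monomial determinant, leaving a core $Z$ in which every column has exactly two and every row at least two nonzero entries, and both conclude $\det(Z)$ is a binomial because dividing a binomial by a nonzero monomial can neither merge nor split terms. The only difference is organizational: the paper runs the peeling as an induction on $n$ and obtains $\det(\psi)=\det(X)\det(Z)$ from block-triangular determinant formulas, while you run it as an explicit loop and verify the factorization by a permutation/unique-matching argument.
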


\begin{proof}
	If every row of $\psi$ has at least $2$ nonzero entries and every column of $\psi$ has exactly $2$ nonzero entries, set $X, Y, W$ to be empty matrices and $Z= \psi$. Then since an empty product is defined to be $1$, $\det (X)=1$ is a (degenerate) monomial, and the result holds. In particular, the result holds when $n=2$. Assume $n>2$.
	
	Assume there exist $s$ rows of $\psi$ with a single nonzero entry.
	Since $\det (\psi) \not= 0$, every row and column of $\psi$ has at least one nonzero entry and no two rows (columns) have a single nonzero entry in the same column (row). Note also that row and column exchanges modify only the sign of the determinant and not the binomial nature.  By performing row exchanges, we can rearrange all rows with a single nonzero entry to come before all rows with multiple nonzero entries. That is, we may assume that $\psi$ has the form
	$$\psi = \left[\begin{array}{c|c}
	D_1 & 0\\
	\hline
	A_1 & \psi_1
	\end{array}\right],$$
	where $D_1$ is an $s \times s$ diagonal matrix, and $A_1$ is a matrix where each column has at most one nonzero entry. Observe that $\det (D_1)$ is a monomial and $\det (\psi) = \det (D_1) \det(\psi_1)$, so $\det (\psi_1)$ is again a nonzero binomial. As before, each column of $\psi_1$ has either one or two nonzero entries and each row has at least one nonzero entry. Since $\psi_1$ is $n-s \times n-s$ with $s \geq 1$, by induction, 
	$$\psi_1 = \left[ \begin{array}{cc}
	X_1 & W_1 \\Y_1 & Z_1
	\end{array} \right]$$
	where $\det (X_1)$ is a monomial, $\det (Z_1)$ is a binomial, $\det (\psi_1) = \det (X_1) \det (Z_1)$, and every row of $Z_1$ has at least $2$ nonzero entries and every column of $Z_1$ has exactly $2$ nonzero entries. 
	Now 
		$$\psi =	\left[\begin{array}{c|c}
	D_1 & 0\\
	\hline
	A_1 & \begin{array}{c|c}
	X_1 & W_1 \\
	\hline
	Y_1 & Z_1
	\end{array}
	\end{array}\right] = \left[\begin{array}{c|c|c}
	D_1 & 0 & 0\\
	\hline
	A_1' & X_1 & W_1\\
	\hline
	A_1'' & Y_1 & Z_1
	\end{array}\right],$$
	where $A_1', A_1''$ consist of the appropriate rows of $A_1$. Set $Z=Z_1$, $X=\left[ \begin{array}{c|c}
				D_1 & 0 \\
				\hline
				A_1' & X_1
				\end{array} \right]$,
	$W=\left[\begin{array}{c} 0 \\ \hline W_1 \end{array} \right]$, and $Y=\left[ \begin{array}{c|c} A_1'' & Y_1 \end{array} \right]$. Note that 
	$$\det(\psi) = \det(D_1) \det (\psi_1) = \det(D_1) \det (X_1) \det (Z_1) = \det (X) \det (Z)$$ and the result follows.

	Similarly, if any column of $\psi$ has a single nonzero entry, then by performing column exchanges, we may assume $\psi$ has the form
	$$\psi = \left[\begin{array}{c|c}
	D_2 & B_2\\
	\hline
	0 & \psi_2
	\end{array}\right],$$
	where $D_2$ is a diagonal matrix. Observe that $\det (\psi) = \det (D_1) \det(\psi_2)$ and $\det (D_2)$ is a monomial, so $\det (\psi_2)$ is again a nonzero binomial. As before, each column of $\psi_2$ has either one or two nonzero entries and each row has at least one nonzero entry and the result follows by induction as in the case above.
	\end{proof} 

Note that in the above lemma, since the columns of $Z$ have $2$ nonzero entries each and the columns of $\psi$ have at most $2$ nonzero entries, it follows that $W=0$.
In order to state our next result formally, for a matrix $M$, set $\bi(I_t(M))$ to be the collection of $t \times t$ minors of $M$ that are binomials.

\begin{theorem}\label{thm.J}
Let $G$ be a graph with edge ideal $I = I(G)$. Let $J$ be the defining ideal of $R[It]$ and let $k = \rank \overline{B(\phi)}$. Then,
$$J = \big\langle \T \cdot \phi, \bi(I_2(\overline{B(\phi)})), \bi(I_3(\overline{B(\phi)})), \dots ,\bi(I_k(\overline{B(\phi)}))\big\rangle.$$
\end{theorem}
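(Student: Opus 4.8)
The plan is to prove the two containments separately. The containment $J \subseteq \langle \T \cdot \phi, \bi(I_2(\overline{B(\phi)})), \dots, \bi(I_k(\overline{B(\phi)}))\rangle$ is essentially already in hand: by Definition~\ref{Rees} the linear part of $J$ is generated by the entries of $\T \cdot \phi$, and by Corollary~\ref{cor.J} every nonlinear generator of $J$ appears as a binomial minor of $\overline{B(\phi)}$, hence lies in some $\bi(I_t(\overline{B(\phi)}))$ for $2 \le t \le k$. So the forward inclusion follows immediately from the results already established in the excerpt.

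The substance of the theorem is the reverse containment. First I would note that the linear generators $\T \cdot \phi$ are in $J$ by construction. The real task is to show that every binomial minor of $\overline{B(\phi)}$ lies in $J$. So let $M$ be a $t \times t$ submatrix of $\overline{B(\phi)}$ with $\det(M)$ a nonzero binomial. By Corollary~\ref{cor.reducedB}, each column of $M$ has at most two nonzero entries, so $M = \psi$ satisfies the hypotheses of Lemma~\ref{rearrangements}. Applying that lemma, after row and column rearrangements (which affect $\det(M)$ only up to sign and preserve membership in $J$) we obtain a block decomposition
$$M = \left[\begin{array}{cc} X & 0 \\ Y & Z \end{array}\right]$$
with $\det(X)$ a monomial, $\det(Z)$ a binomial, $\det(M) = \det(X)\det(Z)$, and $Z$ having exactly two nonzero entries in each column and at least two in each row. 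The plan is then to argue, exactly as in the final paragraph of the proof of Theorem~\ref{thm.main}, that $Z$ can be further rearranged into block-diagonal form with each diagonal block of the shape (\ref{eq.M}); since $\det(Z)$ is a binomial with distinct entries forced by the two-per-column, two-per-row structure, there can be only one such block, so $Z$ itself is of the form (\ref{eq.M}). Then Lemma~\ref{minor.form} identifies $\det(Z)$ as the binomial $\prod T_{2i} - \prod T_{2i-1}$ of an even closed walk in $G$, which lies in $J$ by Remark~\ref{cycles.eqtns}. Since $\det(M) = \det(X)\det(Z)$ is a monomial multiple of $\det(Z)$ and $J$ is an ideal, $\det(M) \in J$.

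I expect the main obstacle to be a careful treatment of the passage from $Z$ being ``two-nonzero-entries-per-column, at-least-two-per-row'' to $Z$ actually being of the single-block form (\ref{eq.M}). One must verify that the at-least-two-per-row condition, combined with the fact that $\det(Z)$ is a \emph{binomial} (not a larger sum), genuinely forces exactly two nonzero entries per row and a single diagonal block: if the rearranged $Z$ split into more than one block, the determinant would be a product of two or more binomials and hence not itself a binomial, giving the needed contradiction. A subtle point deserving attention is that the entries of $Z$ need not be distinct (unlike in Theorem~\ref{thm.main}, where relative primality of the monomials was assumed), so the argument must rely on the structural block-decomposition of Lemma~\ref{rearrangements} rather than on distinctness of entries; the walk produced may repeat edges, which is exactly why $\det(Z)$ corresponds to a not-necessarily-primitive even closed walk. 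Once this reduction is secured, the rest is a direct appeal to Lemma~\ref{minor.form}, Remark~\ref{cycles.eqtns}, and the ideal property of $J$.
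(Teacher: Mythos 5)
Your forward inclusion and the first half of your reverse inclusion match the paper exactly: Corollary~\ref{cor.J} gives $J \subseteq \big\langle \T\cdot\phi, \bi(I_2(\overline{B(\phi)})), \dots\big\rangle$, and Lemma~\ref{rearrangements} reduces the converse to a matrix $Z$ with exactly two nonzero entries in every column and (by counting) exactly two in every row. The gap is in your next step. To conclude that $Z$ is a \emph{single} block of the form (\ref{eq.M}), you argue that otherwise $\det(Z)$ would be a product of two or more binomials ``and hence not itself a binomial.'' That implication is precisely what can fail once entries repeat, and the paper flags this explicitly in the remark following Theorem~\ref{thm.J}: it \emph{is} possible for a product of binomials to be a binomial. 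For instance, in characteristic $2$ one has $(A-B)(C-D) = AC + BD$ whenever $AD = BC$ with $A \neq B$, $C \neq D$, and nothing in your hypotheses rules such coincidences out; even in characteristic $\neq 2$ you would owe a cancellation analysis, which the proposal does not supply. Your text is also internally inconsistent on the key point: you first assert that the two-per-row, two-per-column structure forces distinct entries (it does not --- distinctness in Theorem~\ref{thm.main} came from the assumed relative primality of the monomials, which is not available here), and then correctly concede that entries need not be distinct. Without distinctness, the multi-block contradiction you want simply is not there.

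The good news is that the single-block claim is unnecessary, and dropping it repairs your argument with tools you already invoke. Once $Z$ is rearranged into block-diagonal form with blocks $Z_1, \dots, Z_s$ each of the form (\ref{eq.M}) --- this part of your argument is sound, since $2t$ nonzero entries spread over $t$ rows with at least two per row forces exactly two per row, and the resulting $2$-regular incidence structure decomposes into cycles --- you have $\det(Z) = \pm\prod_i \det(Z_i)$, where each $\det(Z_i)$ lies in $J$ by Lemma~\ref{minor.form} and Remark~\ref{cycles.eqtns}; since $J$ is an ideal, it suffices that \emph{one} factor lies in $J$, whatever the other factors are. The paper's own proof sidesteps the issue in a closely related way: rather than producing a full block decomposition, it peels off a single block $M_{\psi_p}$ of the form (\ref{eq.M}) together with a lower-triangular factor, writes $\det(\psi_p) = \det(L_{\psi_p})\det(M_{\psi_p})\det(\psi_{p+1})$, and absorbs the unanalyzed factor $\det(\psi_{p+1})$ using the ideal property --- never claiming anything about how many blocks there are or about what a product of their determinants looks like.
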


\begin{proof}
One inclusion follows directly from Corollary~\ref{cor.J}.

For the reverse inclusion, suppose that $\psi$ is a $t \times t$ submatrix of $\overline{B(\phi)}$ with $\det (\psi)$ a binomial. We will show that $\det (\psi) \in J$.
Indeed, since $\det (\psi) \not= 0$, every row and column of $\psi$ has at least one nonzero entry and no two rows (columns) have a single nonzero entry in the same column (row). As noted before, each nonzero column of $\overline{B(\phi)}$ has precisely $2$ nonzero entries. Thus, each column of $\psi$ has at most $2$ nonzero entries. Applying Lemma~\ref{rearrangements} we can assume that $\psi = \left[ \begin{array}{c|c} A & B \\ \hline C & \psi_p \end{array} \right]$ where $ \psi_p$ is a minor of $\psi$ in which every column has exactly two nonzero entries, every row has at least two nonzero entries, and $\det (\psi_p)$ is a nonzero binomial with $\det(\psi)= \det(A)\det(\psi_p)$. Thus, if $\det(\psi_p) \in J$, then $\det(\psi)\in J$.

Now, we can reorder the rows of $\psi_p$ so that the nonzero entries of the first column appear in the first two rows. Since the second row has at least two nonzero entries, we can rearrange the remaining columns of $\psi_p$ so that the $(2,2)$ entry is not zero. If the second nonzero entry of column $2$ is not in row $1$ then we can rearrange the remaining rows so that the $(3,2)$ entry of $\psi_p$ is not $0$. We can continue to rearrange the rows and columns of $\psi_p$ in this manner (see also (\ref{eq.M})) until for some row $i$, the remaining columns with nonzero entries in row $i$ have the second nonzero entry in row $j$ for some $j < i$. At this point, $\psi_p$ has the following form
$$\psi_p = \left[\begin{array}{c|c}
\begin{array}{cccccc}
* & 0 & 0 & 0 & \cdots & 0 \\
* & * & 0 & 0 & \cdots & 0 \\
0 & * & * & 0 & \cdots & *\\
0 & 0 & * & * & \cdots & 0 \\
 & & & \ddots & & \\
 0& 0& 0 & 0 & \cdots & *
\end{array} & N \\
\hline
0 & \psi_{p+1}
\end{array}\right],$$
where $*$ denotes a nonzero entry and the position of the second $*$ in the final column before $N$ is illustrative. Notice that $\psi_p$ has a minor of the form of (\ref{eq.M}). In addition, $\psi_p$ has the form
$$\psi_p = \left[\begin{array}{c|c}
\begin{array}{c|c}
L_{\psi_p} & 0\\
\hline
C_{\psi_p} & M_{\psi_p}
\end{array} & N \\
\hline
0 & \psi_{p+1}
\end{array}\right]$$
where $L_{\psi_p}$ is lower triangular (the empty matrix if $j=1$), and $M_{\psi_p}$ has the form of (\ref{eq.M}). Note that
$$\det(\psi_p) = \det(L_{\psi_p}) \det(M_{\psi_p}) \det(\psi_{p+1}).$$
By Lemma~\ref{minor.form}, $\det(M_{\psi_p})\in J$, and since $J$ is closed under multiplication, $\det(\psi)\in J$ as required.
\end{proof}

\begin{remark}
It can be seen in the proof above that every row of $\psi_{p+1}$ has at least two nonzero entries and each column has either one or two nonzero entries. We can continue the process to get a block decomposition of $\psi$, in which the determinant is the product of the determinants of the diagonal blocks, each of which is either a monomial or a binomial coming from a block of the form of (\ref{eq.M}). Hence, $\det(\psi)$ can be written as a product of monomials and of binomials coming from blocks of the form of (\ref{eq.M}). Since minors of the form of (\ref{eq.M}) correspond to (not necessarily primitive) even closed walks, these binomials are in  $J$ by \cite{Rafael1995}. Thus not only are the binomial minors in $J$, if they are not irreducible, they factor as products of monomials and binomial elements of $J$.
\end{remark}

In the proof of Theorem~\ref{thm.J}, the fact that $\det(\psi)$ was a binomial came from the statement. Since the entries of $\psi$ are not assumed to be distinct, it is possible for the product of two binomials to be a binomial. However, much of the proof holds if instead $\det (\psi)$ is assumed not to be a monomial. The following result demonstrates how to use this to obtain all nonlinear generators of $J$ from a single sized ideal of minors of $\overline{B(\phi)}$.

\begin{corollary}
Let $k = \rank \overline{B(\phi)}$. Then, all nonlinear generators of $J$ can be obtained as factors of generators of $I_k(\overline{B(\phi)})$.
\end{corollary}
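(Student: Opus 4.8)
The plan is to start from the correspondence already established in Corollary~\ref{cor.J}: every nonlinear generator $g$ of $J$ arises from a primitive even closed walk, and the proof of that corollary produces a $t\times t$ submatrix $M_W$ of $\overline{B(\phi)}$ of the form~(\ref{eq.M}) with $\det(M_W)=g$, say supported on a set of rows $R_0$ and a set of columns $C_0$ with $|R_0|=|C_0|=t$. It therefore suffices to exhibit a $k\times k$ submatrix $\Psi$ of $\overline{B(\phi)}$ whose determinant is a nonzero multiple of $g$: such $\det(\Psi)$ is by definition one of the generators of $I_k(\overline{B(\phi)})$, and $g$ is a factor of it.

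The key observation is that the columns in $C_0$ have all of their nonzero entries inside the rows $R_0$. Indeed, by Corollary~\ref{cor.reducedB} every nonzero column of $\overline{B(\phi)}$ has exactly two nonzero entries, and each column of $M_W$ already displays two nonzero entries within $R_0$; hence these account for all the nonzero entries of that column. Consequently, in any submatrix $\Psi$ built from rows $R_1\supseteq R_0$ and columns $C_1\supseteq C_0$, the block of $\Psi$ on rows $R_1\setminus R_0$ and columns $C_0$ is identically zero. Ordering rows and columns so that those of $M_W$ come first, $\Psi$ becomes block upper-triangular,
$$\Psi=\left[\begin{array}{cc} M_W & D \\ 0 & Q\end{array}\right],$$
so that $\det(\Psi)=\det(M_W)\det(Q)=g\cdot\det(Q)$. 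Thus the whole task reduces to extending $M_W$ to a \emph{nonsingular} $k\times k$ submatrix of this shape.

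I would obtain such an extension by a standard basis-extension (Steinitz exchange) argument, working over the fraction field $\kk(T_1,\dots,T_r)$, in which $\overline{B(\phi)}$ has rank $k$. The columns $C_0$ are linearly independent (as $M_W$ is nonsingular), so they extend to a set $C_1$ of $k$ linearly independent columns of $\overline{B(\phi)}$. The $k$-column submatrix on $C_1$ then has full column rank $k$; within it the rows $R_0$ are independent, so they extend to a set $R_1\supseteq R_0$ of $k$ rows on which the resulting $k\times k$ submatrix $\Psi$ is nonsingular. By the previous paragraph $\det(\Psi)=g\cdot\det(Q)$ with $\det(Q)\ne 0$, which proves the claim; the degenerate case $t=k$, where $Q$ is empty and $g=\det(\Psi)$, is immediate.

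The main obstacle — and the step that makes the statement work at all — is recognizing that $C_0$ meets no row outside $R_0$, which forces the block-triangular shape and hence the clean factorization; without it, extending $M_W$ to a maximal nonsingular minor would in general scramble the determinant and destroy $g$ as a factor. A secondary point requiring care is that the extensions must be carried out using genuine rows and columns of $\overline{B(\phi)}$ rather than arbitrary vectors, which is precisely what the matroid/Steinitz extension of an independent set to a basis inside a spanning set guarantees, and that all rank statements are understood over $\kk(T_1,\dots,T_r)$.
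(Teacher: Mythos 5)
Your proposal is correct and takes essentially the same approach as the paper: both rest on the key observation that each column of the minor coming from Corollary~\ref{cor.J} has both of its nonzero entries within that minor's rows, giving the block upper-triangular shape and hence the factorization $\det(\widehat{M})=\det(M)\det(B_2')$ once $M$ is extended to a nonsingular $k\times k$ submatrix of $\overline{B(\phi)}$. Your Steinitz-exchange argument over $\kk(T_1,\dots,T_r)$ merely makes explicit the basis-extension step that the paper asserts without detail.
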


\begin{proof}
By Corollary~\ref{cor.J}, every nonlinear generator of $J$ appears as some binomial minor of $\overline{B(\phi)}$. Let $f$ be a nonlinear generator of $J$ and let $M$ be the corresponding submatrix of $\overline{B(\phi)}$. Since $J$ is generated by primitive even walks, the monomials in $f$ are relatively prime, so every column of $M$ contains exactly two nonzero entries.  By performing row and column exchanges, write
$$\overline{B(\phi)} = \left[ \begin{array}{c|c}
M & * \\
\hline
0 & B_2
\end{array} \right].$$
The set of columns of $\overline{B(\phi)}$ used to form $M$ can be extended to a set of columns of full rank. That is, by selecting appropriate columns and rows from $B_2$, there is a $k \times k$ submatrix of $\overline{B(\phi)}$ of the form
$$\widehat{M} = \left[ \begin{array}{c|c}
M & * \\
\hline
0 & B_2'
\end{array} \right]$$
with nonzero determinant. Since $\det (\widehat{M}) = \det (M) \det (B_2')$, $f$ is a factor of an element of $I_k(\overline{B(\phi)})$ as desired.
\end{proof}


\section{Directed cycles in digraphs} \label{sec.digraph}

In this part of the paper, we shall discuss an interesting application of our main result, Theorem \ref{thm.main}, to the problem of detecting the existence of directed cycles in a given directed graph.

Let $D = (Z, \vec{E})$ be a directed graph over the vertex set $Z = \{z_1, \dots, z_m\}$. Let $G = G(D)$ be the undirected bipartite graph constructed from $D$ as in Definition \ref{def.Bgraph}. Recall that $M_D$ represents the perfect matching $\left\{e_i = x_iy_i ~\big|~ i = 1, \dots, m\right\}$ in $G = G(D)$.

The connection between directed cycles in a digraph $D$ and even cycles in $G(D)$ is well established (see for example \cite{DM, KL}). We shall present the following known result in a form that is convenient for applying Theorem~\ref{thm.main}. Note that, in an even circuit, we can start at any place and collect the first, third, fifth, etc. (all the odd ordered) edges, or collect the second, fourth, etc. (all the even ordered) edges. This way we will get two sets of disjoint edges, each consisting of exactly half the number of edges on the circuit. We will refer to each of these two sets a \emph{collection of alternating edges} in the even circuit.

\begin{theorem} \label{thm.cycle}
	The directed cycles in $D$ correspond exactly to the even cycles in $G = G(D)$ in which a collection of alternating edges forms a subset of the perfect matching $M_D$ in $G$.
\end{theorem}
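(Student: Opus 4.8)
The plan is to exhibit an explicit bijection between directed cycles in $D$ and even cycles in $G=G(D)$ whose alternating edges include a subset of $M_D$, and to check that it respects the cycle structure in both directions. Recall the construction of $G(D)$: each vertex $z_i$ of $D$ splits into $x_i$ (an ``out'' copy) and $y_i$ (an ``in'' copy), the matching edge $x_iy_i$ always exists, and a directed edge $z_i \to z_j$ produces the crossing edge $x_iy_j$. First I would fix a directed cycle $C: z_{i_1} \to z_{i_2} \to \dots \to z_{i_\ell} \to z_{i_1}$ in $D$ and write down the natural candidate in $G$: for each directed edge $z_{i_k}\to z_{i_{k+1}}$ take the crossing edge $x_{i_k}y_{i_{k+1}}$, and for each vertex $z_{i_k}$ visited take the matching edge $x_{i_k}y_{i_k}$. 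Concatenating these in the order
$$ x_{i_1}\,y_{i_2}\,x_{i_2}\,y_{i_3}\,x_{i_3}\,\cdots\,y_{i_\ell}\,x_{i_\ell}\,y_{i_1}\,x_{i_1} $$
yields a closed walk in $G$ of length $2\ell$ that alternates between crossing edges and matching edges. The matching edges used form exactly $\{x_{i_k}y_{i_k} : k=1,\dots,\ell\}\subseteq M_D$, so one collection of alternating edges is a subset of $M_D$, as required.

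\textbf{Checking it is a genuine even cycle.}
Next I would verify that this walk is in fact a cycle, i.e.\ that its vertices are distinct. Because $C$ is a directed cycle, the indices $i_1,\dots,i_\ell$ are pairwise distinct; hence the $x$-vertices $x_{i_1},\dots,x_{i_\ell}$ are distinct among themselves, and likewise the $y$-vertices, and since $x$- and $y$-vertices lie in opposite sides of the bipartition they can never coincide. Thus all $2\ell$ vertices are distinct and the walk is an even cycle. Its length is even automatically, as $G$ is bipartite. This handles the direction from directed cycles to even cycles of the stated type.

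\textbf{The reverse direction.}
For the converse, I would start with an even cycle $\Gamma$ in $G$ one of whose alternating edge-collections is contained in $M_D$. Since $\Gamma$ is a cycle, consecutive edges alternate between the two stated collections, so every other edge of $\Gamma$ is a matching edge $x_iy_i$, while the interleaving edges must be crossing edges $x_ay_b$ with $a\neq b$ (a matching edge cannot appear in both collections of a genuine cycle, and the non-matching collection consists of edges that are not in $M_D$, hence are crossing edges). Reading $\Gamma$ as $x_{i_1}y_{i_1}x_{i_2}y_{i_2}\cdots$ after a suitable rotation, each matching edge $x_{i_k}y_{i_k}$ names a vertex $z_{i_k}$ of $D$, and each crossing edge $x_{i_k}y_{i_{k+1}}$ certifies (by Definition~\ref{def.Bgraph}) that $z_{i_k}\to z_{i_{k+1}}$ is a directed edge. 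Stringing these together produces a closed directed walk $z_{i_1}\to z_{i_2}\to\dots\to z_{i_1}$ in $D$; distinctness of the $x_{i_k}$ (from $\Gamma$ being a cycle) forces the $z_{i_k}$ to be distinct, so this is a directed cycle. I would then check the two assignments are mutually inverse, which is immediate from the explicit formulas.

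\textbf{Main obstacle.}
The routine verifications (distinctness, even length, invertibility) are mechanical. The subtle point — the one step I would be most careful about — is the reverse direction's claim that the interleaving edges are \emph{forced} to be crossing edges and that the cycle's alternation is forced to align matching edges into one single collection rather than being scattered. Here one must use that $\Gamma$ is a cycle (not merely a closed walk), so its edges strictly alternate between the two collections, together with the hypothesis that one whole collection lies in $M_D$; I would argue that a matching edge $x_iy_i$ and a crossing edge are the only two edge types at each vertex and that the bipartite alternation of $x$'s and $y$'s pins down the pattern uniquely. Establishing this rigidity cleanly, and ruling out degenerate short cases (e.g.\ a $2$-cycle arising from a single matching edge traversed twice, which the distinctness of $\Gamma$'s vertices excludes), is where the real content of the proof lies.
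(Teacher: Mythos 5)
Your proposal is correct, and its core --- the explicit correspondence between $z_{i_1}\to z_{i_2}\to\cdots\to z_{i_\ell}\to z_{i_1}$ and the alternating walk $x_{i_1},y_{i_2},x_{i_2},\dots,x_{i_\ell},y_{i_1},x_{i_1}$ --- is exactly the correspondence used in the paper's proof. The one substantive difference is scope: the paper runs the argument at the level of indecomposable even \emph{circuits}, first observing that any even circuit whose alternating edges lie in $M_D$ is forced to be a cycle (entering $x_i$ or $y_i$ along a non-matching edge forces the next edge to be $x_iy_i$, and distinctness of a circuit's edges then prevents any vertex from recurring). That observation is not needed for the theorem as stated, and your cycle-only argument does prove the statement; but the paper needs the circuit version because Corollary \ref{cor.digraph} applies Theorem \ref{thm.main}, which detects indecomposable even circuits rather than cycles, so one must know that the two classes coincide under the matching hypothesis --- your proof alone would leave that later step unsupported. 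Two minor points about your reverse direction: (i) the ``rigidity'' you flag as the main obstacle is actually automatic for cycles --- once one alternating collection consists of matching edges $x_{i_k}y_{i_k}$, distinctness of the cycle's vertices makes the indices $i_k$ pairwise distinct, so every interleaving edge joins vertices with different indices and is therefore a crossing edge, the bipartite $x$/$y$ alternation then pinning down the pattern; (ii) with your chosen reading $x_{i_1}y_{i_1}x_{i_2}y_{i_2}\cdots$, the crossing edges are $x_{i_{k+1}}y_{i_k}$ and certify $z_{i_{k+1}}\to z_{i_k}$, so the directed cycle you obtain runs opposite to your traversal; this is harmless, but the indices in your formula should be swapped (or the cycle traversed in the other direction).
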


\begin{proof} We start the proof with the following observation. Consider an even circuit $C$ in $G$ with the property that its alternating edges form a subset of the perfect matching $M_D$. It can be seen that if one transverses around $C$ on its edges and hits $x_i$ (or $y_i$) from an edge that is not in $M_D$ then the next edge of $C$ has to be $x_iy_i$. Since in a circuit the edges are distinct, this ensures that $C$ cannot contain $x_i$ (or $y_i$) more than once. That is, $C$ is a cycle (which is necessarily indecomposable). Thus, the indecomposable circuits in $G$ with the property that a collection of their alternating edges forms a subset of the perfect matching $M_D$ are exactly the even cycles in $G$ with the same property.
	
Suppose that $z_{i_1} \rightarrow z_{i_2} \rightarrow \dots \rightarrow z_{i_t} \rightarrow z_{i_1}$ is a directed cycle in $D$. By the construction of $G$, it is easy to see that $x_{i_1}, y_{i_2}, x_{i_2}, y_{i_3}, x_{i_3}, \dots, x_{i_t}, y_{i_1}, x_{i_1}$ is an indecomposable even circuit in $G$. Moreover, the collection of even edges in this circuit is $\{e_{i_1}, \dots, e_{i_t}\}$, which is a subset of the perfect matching $M_D$.
	
Conversely, suppose that $G$ contains an indecomposable even circuit $C$ in which a collection of alternating edges form a subset of the perfect matching $M_D$. Since $G$ is bipartite, every edge in $G$ (and so any edge in $C$) connects a vertex $x_{i_j}$ to a vertex $y_{i_k}$. Thus, without loss of generality, we may assume that the circuit $C$ is of the form $x_{i_1}, y_{i_2}, x_{i_2}, y_{i_3}, x_{i_3}, \dots, x_{i_t}, y_{i_1}, x_{i_1}$. Since $C$ is a circuit and $C$ contains $\{e_{i_1}, \dots, e_{i_t}\}$, it follows that $i_1, \dots, i_t$ are distinct indices. By the construction of $G$ again, we have a directed cycle $z_{i_1} \rightarrow z_{i_2} \rightarrow \dots \rightarrow z_{i_t} \rightarrow z_{i_1}$ in $D$.
\end{proof}

\begin{example}
Let $D$ be the directed graph in Example \ref{ex.digraph} and let $G = G(D)$ be its associated bipartite graph.
It can be seen that $G$ has only one even cycle whose alternating edges form a subset of the perfect matching $M_D$, namely, $x_2, y_3, x_3, y_5, x_5, y_4, x_4, y_2, x_2$. This even cycle of $G$ corresponds to the directed cycle $z_2 \rightarrow z_3 \rightarrow z_5 \rightarrow z_4 \rightarrow z_2$ in $D$.
	
	Note that $z_1, z_2, z_3, z_1$ does not form a directed cycle in $D$ even though its undirected edges would form a triangle. This is reflected by the fact that there is no even cycle between $x_1, y_1, x_2, y_2, x_3, y_3$ in $G$. Furthermore, not all even cycles in $G$ would correspond to directed cycles in $D$. For instance, consider the even cycle $x_1, y_3, x_3, y_5, x_5, y_4, x_4, y_2, x_1$ in $G$. Neither collection of alternating edges of this cycle is a subset of the perfect matching $M_D$, and the this even cycle does not correspond to any directed cycle in $D$ ($z_1, z_3, z_5, z_4, z_2, z_1$ does not form a directed cycle in $D$).
\end{example}

As a corollary of Theorem \ref{thm.cycle}, we derive an algebraic algorithm to enumerate all directed cycles in a given digraph. Note that, by the proof of Theorem \ref{thm.cycle}, indecomposable even circuits of $G = G(D)$, in which a collection of alternating edges is a subset of $M_D$, are exactly the even cycles in $G$ with the same property.

\begin{corollary} \label{cor.digraph}
Let $D$ be a digraph and let $G = G(D)$ be its corresponding bipartite graph. Let $\phi$ be the presentation of the edge ideal $I = I(G)$ of $G$ that results from its Taylor resolution. Let $\overline{B(\phi)}$ be its reduced Jacobian dual. Then, the directed cycles of length $t$ in $D$ correspond exactly to the binomial $t \times t$ minors of $\overline{B(\phi)}$ that satisfy the following conditions:
\begin{enumerate}
\item their columns are pairwise center-distinct;
\item their monomials are square-free and relatively prime; and
\item one of these monomials is the product of variables that correspond to a subset of the perfect matching $M_D$.
\end{enumerate}
\end{corollary}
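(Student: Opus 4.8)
The plan is to combine the two main results of the paper, Theorem~\ref{thm.main} and Theorem~\ref{thm.cycle}, and translate the cycle characterization through the dictionary provided by Lemma~\ref{minor.form}. First I would invoke Theorem~\ref{thm.cycle} to reduce the problem: the directed cycles of length $t$ in $D$ correspond exactly to the even cycles of length $2t$ in $G = G(D)$ in which a collection of alternating edges forms a subset of the perfect matching $M_D$. So it suffices to show that these particular even cycles correspond exactly to the binomial $t \times t$ minors of $\overline{B(\phi)}$ satisfying conditions (1)--(3).

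The second step is to characterize even \emph{cycles} (as opposed to circuits) algebraically. Here I would appeal to Remark~\ref{rmk.cycle}, which states that even cycles of $G$ correspond exactly to binomial minors of $\overline{B(\phi)}$ whose monomials are square-free and relatively prime and for which the row-labels together with the column mid-points are pairwise distinct. However, it is cleaner to observe, as noted in the paragraph preceding Corollary~\ref{cor.digraph} (and in the proof of Theorem~\ref{thm.cycle}), that among circuits whose alternating edges lie in $M_D$, being a cycle is automatic. Therefore I can work with Theorem~\ref{thm.main} directly: conditions (1) and (2) of Corollary~\ref{cor.digraph} are precisely conditions (1) and (2) of Theorem~\ref{thm.main}, so a binomial minor satisfying them corresponds to an indecomposable even circuit $C$ of length $2t$ in $G$. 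What remains is to match the matching condition in Theorem~\ref{thm.cycle} with the new condition (3).

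The third and central step is to establish that condition~(3) is equivalent to ``a collection of alternating edges of $C$ forms a subset of $M_D$.'' This is where the correspondence between the $T_i$'s and the edges of $G$ does the work. Recall from Lemma~\ref{minor.form} that $\det M = \prod_{i=1}^t T_{2i} - \prod_{i=1}^t T_{2i-1}$, where the two alternating collections of edges of the even closed walk are exactly $\{$edges corresponding to $T_2, T_4, \dots, T_{2t}\}$ and $\{$edges corresponding to $T_1, T_3, \dots, T_{2t-1}\}$. Since each $T_i$ corresponds under $\theta$ to an edge $f_i$ of $G$, one of the two monomials of the binomial minor is the product of variables indexing one alternating collection of edges. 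Thus ``one of these monomials is a product of variables corresponding to a subset of $M_D$'' says exactly that one alternating collection of edges is contained in the perfect matching $M_D$, which is the condition of Theorem~\ref{thm.cycle}. Piecing these together: a binomial minor satisfying (1), (2), (3) yields (via Theorem~\ref{thm.main}) an indecomposable even circuit whose alternating edges lie in $M_D$, hence (via Theorem~\ref{thm.cycle}) a directed cycle in $D$; and conversely every directed cycle produces such a minor. Counting edges, a directed cycle of length $t$ produces an even cycle of length $2t$ in $G$, hence a $t \times t$ minor, giving the length bookkeeping.

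I expect the main obstacle to be purely bookkeeping rather than conceptual: one must be careful that the identification of the $T_i$ appearing in a given minor with actual edges of $G$ is unambiguous, and that ``a subset of $M_D$'' is correctly read off from the monomial $\prod T_{2i}$ or $\prod T_{2i-1}$. A subtle point worth checking is that condition~(3) need only be imposed on \emph{one} of the two monomials, and that this choice is consistent with the freedom in Theorem~\ref{thm.cycle} to pick either alternating collection; since $M_D$ is a matching and the two monomials involve disjoint variable sets, no ambiguity arises. I would therefore keep the proof short, essentially a paragraph that says: apply Theorem~\ref{thm.main} to get conditions (1), (2), then translate the $M_D$-condition of Theorem~\ref{thm.cycle} into condition~(3) via the edge-to-$T_i$ dictionary of Remark~\ref{cycles.eqtns} and Lemma~\ref{minor.form}, and finally note that the matching property forces the circuit to be a cycle, so nothing extra need be assumed.
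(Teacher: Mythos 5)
Your proposal is correct and follows essentially the same route as the paper, whose entire proof is the one-line statement that the corollary is a direct consequence of Theorem~\ref{thm.main} and Theorem~\ref{thm.cycle}; you have simply spelled out the details that the paper leaves implicit (the circuit-versus-cycle reduction from the first paragraph of the proof of Theorem~\ref{thm.cycle}, and the translation of the $M_D$-condition into condition~(3) via the correspondence $T_i \leftrightarrow$ edges in $\det M = \prod_{i=1}^t T_{2i} - \prod_{i=1}^t T_{2i-1}$). Your bookkeeping of lengths and your observation that the matching condition need only hold for one of the two relatively prime monomials are both consistent with the paper's setup.
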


\begin{proof}
The assertion is a direct consequence of Theorem \ref{thm.main} and Theorem \ref{thm.cycle}.
\end{proof}

We recall the famous Caccetta-H\"aggkvist conjecture for directed cycles in digraphs (\cite{CH}).

\begin{conjecture}[Caccetta-H\"aggkvist] \label{conjCH}
	Let $D$ be a digraph on $n$ vertices. Let $\ell \in \NN$ and suppose that the outdegree of each vertex in $D$ is at least $\dfrac{n}{\ell}$. Then $D$ contains a directed cycle of length at most $\ell$.
\end{conjecture}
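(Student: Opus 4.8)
The plan is to treat the statement exactly as worded, arguing by contradiction: assume that every vertex of $D$ has outdegree at least $n/\ell$ yet $D$ has no directed cycle of length at most $\ell$, and try to derive a contradiction from a counting estimate on iterated out-neighborhoods. Concretely, I would fix a vertex $v$ and consider the sets $S_k$ of vertices reachable from $v$ by a directed path of length exactly $k$, for $k = 0, 1, \dots, \ell$. The hypothesis that each outdegree is at least $n/\ell$ is meant to force each successive $S_k$ to carry ``enough new vertices,'' while the assumed absence of a directed cycle of length $\le \ell$ is what should forbid a directed path of length $\le \ell$ from returning to $v$ and hence prevent the neighborhoods from collapsing prematurely. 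Iterating the expansion $\ell$ times should then exhaust more than $n$ vertices, contradicting $|V(D)| = n$.

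A complementary route, and the one most natural in the context of this paper, is to run the same argument through the algebraic dictionary of Corollary~\ref{cor.digraph}. Passing from $D$ to $G = G(D)$, the outdegree hypothesis says that in the bipartite graph each $x_i$ is adjacent to at least $n/\ell$ of the vertices $y_j$ (with $j \neq i$) in addition to its matching partner $y_i$. By Lemma~\ref{lem.phi} and Corollary~\ref{cor.reducedB} this produces a large, structured supply of nonzero columns of $\overline{B(\phi)}$, and by Corollary~\ref{cor.digraph} a directed cycle of length $t \le \ell$ is precisely a binomial $t \times t$ minor of $\overline{B(\phi)}$ that is center-distinct, square-free, relatively prime, and has one monomial supported on $M_D$. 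In this language the conjecture becomes the matrix-theoretic claim that a reduced Jacobian dual this dense must contain an admissible binomial minor of size at most $\ell$; one would hope to extract it by a rank or ideal-of-minors estimate on the $I_t(\overline{B(\phi)})$.

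The hard part, and I should be candid that it is the genuine crux rather than a routine estimate, is the final expansion inequality itself. Iterated out-neighborhoods overlap, so their sizes do not simply multiply by $n/\ell$; controlling this overlap sharply enough to close the argument in exactly $\ell$ steps is precisely the content of the Caccetta-H\"aggkvist conjecture, which remains open even in its first nontrivial instance $\ell = 3$ (minimum outdegree $n/3$ forcing a directed triangle). The algebraic reformulation of the previous paragraph does not circumvent this obstacle; it relocates it to a statement about the sizes of binomial minors of $\overline{B(\phi)}$. Accordingly the plan cannot, with present techniques, be carried to completion: no unconditional proof is known, and the most one can honestly propose is the reduction above together with the hope that commutative-algebra invariants of the edge ideal $I(G(D))$ — for example bounds on $\reg$, on the ideals $I_t(\overline{B(\phi)})$, or on the Rees algebra $R[I(G(D))t]$ — might supply the missing expansion estimate.
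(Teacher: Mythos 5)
You have correctly recognized that this statement is the famous Caccetta--H\"aggkvist conjecture, which the paper itself states without proof (citing \cite{CH}) and merely reformulates: via Theorems \ref{thm.main} and \ref{thm.cycle} it obtains the equivalent algebraic Conjectures \ref{conj} and \ref{conj,J}, not a demonstration. Your proposal matches the paper's treatment exactly---your translation through Corollary \ref{cor.digraph} into binomial minors of $\overline{B(\phi)}$ is precisely the paper's reformulation, and your candid diagnosis that the iterated out-neighborhood expansion estimate is the open crux (already at $\ell = 3$) and is only relocated, not resolved, by the algebraic dictionary is accurate.
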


As a consequence of Theorems \ref{thm.main} and \ref{thm.cycle}, we are able to present a Jacobian dual matrix interpretation of the Caccetta-H\"aggkvist conjecture as follows. Note that every cycle is a primitive even closed walk, and that for a bipartite graph, the two notions coincide.

\begin{conjecture} \label{conj}
	Let $D$ be a digraph on $n$ vertices such that the outdegree of each vertex in $D$ is at least $\dfrac{n}{\ell}$. Let $\phi$ be the Taylor presentation matrix of $I(G(D))$. Then for some $q \leq \ell$, $I_q(\overline{B(\phi)})$ contains a binomial with square-free, relatively prime terms, one of which is a product of elements of $M_D$.
\end{conjecture}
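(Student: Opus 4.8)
The plan is to reformulate the Caccetta--H\"aggkvist conjecture as a statement about the bipartite graph $G = G(D)$ and then translate it, via the results already established, into the language of binomial minors of $\overline{B(\phi)}$. The starting point is the original Conjecture~\ref{conjCH}: under the hypothesis that every vertex of $D$ has outdegree at least $n/\ell$, the digraph $D$ contains a directed cycle of length at most $\ell$. I would first invoke Theorem~\ref{thm.cycle}, which establishes an exact correspondence between directed cycles in $D$ and even cycles in $G$ whose collection of alternating edges forms a subset of the perfect matching $M_D$. Under this correspondence, a directed cycle of length $q$ in $D$ corresponds to an even cycle of length $2q$ in $G$ with the stated matching property. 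Thus the existence of a directed cycle of length at most $\ell$ is equivalent to the existence of such an even cycle in $G$ of length $2q$ for some $q \leq \ell$.

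Next I would pass from even cycles in $G$ to binomial minors of $\overline{B(\phi)}$. The key observation is that an even cycle in $G$ is in particular an indecomposable even circuit (as noted in the proof of Theorem~\ref{thm.cycle}, a circuit whose alternating edges lie in $M_D$ is automatically a cycle). Therefore Theorem~\ref{thm.main} applies: the even cycle of length $2q$ corresponds to a $q \times q$ binomial minor of $\overline{B(\phi)}$ whose two monomials are square-free and relatively prime, and whose columns are pairwise center-distinct. The matching condition in Theorem~\ref{thm.cycle} translates directly into the requirement that one of the two monomials of this binomial is a product of variables $T_i$ corresponding to the matching edges $e_i \in M_D$, since the alternating edges of the cycle are exactly the edges contributing one of the two products $\prod T_{2i}$ or $\prod T_{2i-1}$ in the determinant computation of Lemma~\ref{minor.form}. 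In short, the directed cycle of length $q \leq \ell$ guaranteed by Conjecture~\ref{conjCH} yields a $q \times q$ binomial minor of $\overline{B(\phi)}$ of exactly the type asserted in Conjecture~\ref{conj}, lying in $I_q(\overline{B(\phi)})$ for some $q \leq \ell$.

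Assembling these pieces, the argument is essentially a dictionary: the hypothesis on outdegrees is carried verbatim to $D$, Theorem~\ref{thm.cycle} converts the conclusion ``$D$ has a short directed cycle'' into ``$G$ has a short even cycle with the $M_D$ property,'' and Theorem~\ref{thm.main} (in the cycle form of Remark~\ref{rmk.cycle}) converts the latter into ``$\overline{B(\phi)}$ has a small binomial minor of the prescribed form.'' Since every one of these correspondences is an equivalence, the two conjectures are logically equivalent, and so Conjecture~\ref{conj} is a faithful restatement of the Caccetta--H\"aggkvist conjecture rather than a weaker or stronger assertion. The main point to handle carefully is the bookkeeping around the matching condition: one must verify that the product of $T_i$'s appearing in one of the two monomials of $\det M$ corresponds precisely to one of the two alternating-edge collections, and that requiring this collection to lie in $M_D$ is exactly condition~(3) of Corollary~\ref{cor.digraph}. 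I do not expect a genuine obstacle here, since Conjecture~\ref{conj} is offered as a reformulation and not as a theorem with an independent proof; the substance lies entirely in checking that the established correspondences line up, with the matching/alternating-edge translation being the only step requiring attention.
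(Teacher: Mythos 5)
Your proposal is correct and follows essentially the same route as the paper: the paper justifies Conjecture~\ref{conj} precisely by combining Theorem~\ref{thm.cycle} (directed cycles in $D$ correspond to even cycles in $G(D)$ with alternating edges in $M_D$) with Theorem~\ref{thm.main} (such circuits correspond to binomial minors with square-free, relatively prime monomials), exactly as packaged in Corollary~\ref{cor.digraph}, and then observes that Conjectures~\ref{conjCH} and~\ref{conj} are equivalent under this dictionary. Your bookkeeping of the matching condition as condition~(3) of Corollary~\ref{cor.digraph} is the same translation the paper makes.
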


By Theorems \ref{thm.main} and \ref{thm.cycle}, Conjectures \ref{conjCH} and \ref{conj} are equivalent. Conjecture \ref{conj} can also be rephrased using the language of Rees algebras by using Theorem~\ref{thm.J}.

\begin{conjecture} \label{conj,J}
Let $D$ be a digraph on $n$ vertices such that the outdegree of each vertex in $D$ is at least $\dfrac{n}{\ell}$. If $J$ is the defining ideal of the Rees algebra $R[I(G(D))t]$ then, for some $q \leq \ell$, $J$ has a binomial generator of degree $q$, that is square-free and has relatively prime terms, one of which is a product of elements of $M_D$.
\end{conjecture}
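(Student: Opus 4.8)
The plan is to establish that Conjecture \ref{conj,J} is equivalent to Conjecture \ref{conj}; since the two conjectures carry the identical hypothesis on $D$, and since the text has already recorded that Conjecture \ref{conj} is equivalent to the Caccetta--H\"aggkvist conjecture (Conjecture \ref{conjCH}) by Theorems \ref{thm.main} and \ref{thm.cycle}, this identifies Conjecture \ref{conj,J} as a faithful Rees-algebra reformulation of Caccetta--H\"aggkvist. The single bridge I would use is Theorem \ref{thm.J}, supplemented by two facts already available: the nonlinear generators of $J$ correspond to primitive even closed walks of $G = G(D)$ (by \cite{Rafael1995} and \cite[Chapter 10.1]{HH}), and, because $G$ is bipartite, primitive even closed walks coincide with even cycles. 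Throughout I read the phrase ``$I_q(\overline{B(\phi)})$ contains a binomial'' in Conjecture \ref{conj} as the assertion that some $q \times q$ minor is such a binomial, which is the reading consistent with Corollary \ref{cor.digraph}.

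First I would prove Conjecture \ref{conj,J} $\Rightarrow$ Conjecture \ref{conj}. Given a binomial generator $b$ of $J$ of degree $q \le \ell$ with the stated square-free, relatively prime, and $M_D$-product properties, I note that every entry of $\overline{B(\phi)}$ is linear in the $T_i$, so a $t \times t$ minor is homogeneous of degree $t$ in the $T_i$. Theorem \ref{thm.J} exhibits the nonlinear generators of $J$ among the binomial minors $\bi(I_t(\overline{B(\phi)}))$, and matching degrees forces $b$ to be a $q \times q$ binomial minor, hence $b \in I_q(\overline{B(\phi)})$. The three listed properties of $b$ are preserved verbatim, giving Conjecture \ref{conj}.

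For the reverse implication I would begin with a $q \times q$ binomial minor $b$ of $\overline{B(\phi)}$ ($q \le \ell$) whose terms are square-free and relatively prime with one term a product of $M_D$-variables. By the structural analysis in the proof of Theorem \ref{thm.main}, the square-free and relatively prime conditions already force the underlying submatrix to have the shape (\ref{eq.M}), so $b$ corresponds to an even circuit $C$ of length $2q$ in $G$. The key step is to upgrade ``circuit'' to ``cycle'': the hypothesis that one collection of alternating edges of $C$ lies in $M_D$ triggers precisely the argument from the proof of Theorem \ref{thm.cycle}, since each time the walk reaches a vertex $x_i$ or $y_i$ along a non-matching edge the following edge must be $x_iy_i$, forcing all vertices to be distinct. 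Thus $C$ is an indecomposable even cycle, hence a primitive even closed walk because $G$ is bipartite, and therefore $b$ is a nonlinear generator of $J$ by \cite{Rafael1995}. Its $T$-degree is $q \le \ell$ and its properties persist, so Conjecture \ref{conj,J} follows.

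The step I expect to be the main obstacle is reconciling the different shapes of the two conclusions: Conjecture \ref{conj} asks only for a binomial \emph{minor} and imposes no center-distinctness, whereas Conjecture \ref{conj,J} asks for an actual \emph{generator} of $J$, which via \cite{Rafael1995} must come from a primitive walk. The crux is the observation that the $M_D$-product condition is by itself strong enough to force the even circuit to be a cycle, so that indecomposability --- equivalently, pairwise center-distinctness of columns via Lemma \ref{minor.form} --- is automatic and the minor is genuinely a primitive-walk generator rather than a mere ideal element. Verifying that the $T$-degree of the generator equals the size of the minor is the remaining technical point, but it is immediate from the linearity of the entries of $\overline{B(\phi)}$ in the $T_i$.
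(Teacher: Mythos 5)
Since the statement is a conjecture (equivalent to Caccetta--H\"aggkvist), the paper offers no proof of it, only the assertion that it is a rephrasing of Conjecture \ref{conj} obtained via Theorem \ref{thm.J}; your proposal correctly supplies exactly that equivalence, using the same ingredients the paper points to (Theorem \ref{thm.J}/Corollary \ref{cor.J}, the correspondence of nonlinear generators of $J$ with primitive even closed walks from \cite{Rafael1995} and \cite{HH}, the cycle-forcing argument of Theorem \ref{thm.cycle}, and the coincidence of primitive even closed walks with even cycles in bipartite graphs). This is essentially the paper's intended justification, so nothing further is needed.
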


We conclude the paper with the observation that Conjecture \ref{conj} can be further translated into a problem in linear algebra.
Notice that if the outdegree of a vertex $z_i$ is at least $r$, then there are at least $r$ paths of length $2$ using the edge $x_iy_i$ with $x_i$ as the mid-point. The corresponding linear relations $y_iT_{j_i} - y_jT_i$ give specific information about $r$ columns of $\overline{B(\phi)}$ each of which has an element from the perfect matching. If $D$ has $m$ vertices, this yields $mr$ columns of $\overline{B(\phi)}$, each of which contains an element from the perfect matching, which form a fertile source of potential minors using submatrices of the form of (\ref{eq.M}) that would correspond to directed cycles in $D$.

\end{document}